\newcommand{\ad}{\operatorname{ad}}
\newcommand{\car}{\operatorname{char}}
\newcommand{\co}{\operatorname{co}}
\newcommand{\comod}[1]{\operatorname{comod} {#1}}
\newcommand{\GL}[1]{\operatorname{GL}\left(#1\right)}
\newcommand{\gr}[1]{\operatorname{gr} {#1}}
\newcommand{\id}[1]{\operatorname{id}_{#1}}
\newcommand{\modu}[1]{\operatorname{mod} {#1}}
\newcommand{\re}{\operatorname{re}}
\newcommand{\vect}[1]{\operatorname{vec}_{#1}}
\newcommand{\sw}[1]{_{(#1)}}
\newcommand{\bq}{\mathfrak{q}}
\newcommand{\cB}{\mathcal{B}}
\newcommand{\cC}{\mathcal{C}}
\newcommand{\cE}{\mathcal{E}}
\newcommand{\cF}{\mathcal{F}}
\newcommand{\cG}{\mathcal{G}}
\newcommand{\cH}{\mathcal{H}}
\newcommand{\cI}{\mathcal{I}}
\newcommand{\cM}{\mathcal{M}}
\newcommand{\cO}{\mathcal{O}}
\newcommand{\cP}{\mathcal{P}}
\newcommand{\cR}{\mathcal{R}}
\newcommand{\cS}{\mathcal{S}}
\newcommand{\cW}{\mathcal{W}}
\newcommand{\cX}{\mathcal{X}}
\newcommand{\ude}{\underline{\Delta}}
\newcommand{\I}{\mathbb{I}}
\newcommand{\N}{\mathbb{N}}
\newcommand{\Z}{\mathbb{Z}}
\newcommand{\HV}[1]{\mathcal{HV}_{#1}}
\newcommand{\yd}[1]{{}^{#1}_{#1}\mathcal{YD}}
\newcommand{\gt}{\mathtt{g}}
\newcommand{\bla}{\boldsymbol{\lambda}}
\newcommand{\bLa}{\boldsymbol{\Lambda}}
\numberwithin{equation}{section}
\theoremstyle{plain}
\newtheorem{theorem}{Theorem}[section]
\newtheorem{definition-theorem}[theorem]{Definition-Theorem}
\newtheorem{question}[theorem]{Question}
\theoremstyle{definition}
\newtheorem{example}[theorem]{Example}
\newtheorem{remark}[theorem]{Remark}
\newtheorem*{remark*}{Remark}
\newtheorem{conj}[theorem]{Conjecture}
\begin{document}

\newcommand\relatedversion{}

\title{Pointed Hopf algebras revisited, with a view from tensor categories}
\address{\noindent FaMAF-CIEM (CONICET) \\ 
	Universidad Nacional de C\'ordoba \\
	Medina Allende s/n, Ciudad Universitaria \\
	5000 C\'ordoba \\
	Rep\'ublica Argentina
}
\author[Iv\'an Angiono]{Iv\'an Angiono}
\email{ivan.angiono@unc.edu.ar}

\thanks{The work was partially supported by CONICET and Secyt (UNC)}

\maketitle

\begin{center}
{\small To Josefina and Antonela.}
	
\vspace{1cm}
\end{center}







\begin{abstract} 
Hopf algebras appear in connection with various problems in Pure Mathematics and Theoretical Physics, mainly through their categories of representations, which are examples of tensor categories. In recent years, there have been major advances in the classification of finite-dimensional Hopf algebras over the complex numbers, especially when restricted to the pointed case. In this survey, we aim to review the main results in this direction, stating the classification theorems recently obtained and the problems still open, and describing the tools needed to solve the problem by means of the so-called Lifting Method of Andruskiewitsch and Schneider. 

We will highlight a more categorical point of view related to these classification results, which offers for a better description of the so-called liftings up to involving cocycle deformations, and which in turn allows certain problems to be reduced to the associated graded pointed Hopf algebras. We will emphasise this point of view with applications to other contexts related to the aforementioned pointed Hopf algebras, such as finite pointed tensor categories, Hopf algebras that do not satisfy the Chevalley property and their finite-dimensional Nichols algebras, and finally module categories over the categories of representations of pointed Hopf algebras.
\end{abstract}

\section{Introduction.}

A Hopf algebra $H$ is an associative algebra with unit which at the same time admits a structure of coalgebra, compatible with the algebra structure, and an antipode $\cS:H\to H$ (an endomorphism of antialgebras, satisfying appropriate identities). The coalgebra structure provides a structure of \emph{monoidal category} on $\modu{H}$, the category of finite-dimensional left $H$-modules, and the antipode makes it \emph{rigid}: that is, the dual vector space is also a representation, whose action is given using $\cS$. 
Dually, the category $\comod{H}$ of finite-dimensional $H$-comodules has the same properties. See e.g. \cite{Radford-book}.

In addition, both categories are $\Bbbk$-linear and abelian. Putting altogether, both $\modu{H}$ and $\comod{H}$ are tensor categories. Moreover, the forgetfut functors to $\vect{\Bbbk}$ (the category of finite-dimensional vector spaces) are a tensor functor. Reciprocally, any tensor category $\cC$ with a fiber functor to $\vect{\Bbbk}$ can be reconstructed as $\cC\simeq\comod{H}$ for $H$ a Hopf algebra, and when the category is finite, we can also identify $\cC$ with the category of $H^*$-comodules \cite{EGNO}. And although not all tensor categories come from Hopf algebras (as far as they cannot admit a fiber functor), they bring prominent examples. As well, many applications of Hopf algebras come trought tensor categories. Thus, providing new examples of Hopf algebras is a key step towards the applications and problems on Hopf algebras themselves and those on tensor categories.

\emph{From now on, we fix an algebraically closed field $\Bbbk$ of characteristic zero.} Classical results establish that commutative Hopf algebras are related with affine group schemes by means of the algebras of functions while cocommutative Hopf algebras come from groups acting on Lie algebras, due to a celebrated result by Cartier-Kostant-Milnor-Moore. Prominent examples of non commutative and non cocommutative Hopf algebras are given by Drinfeld and Jimbo quantum groups \cite{Drinfeld,Jimbo}. Later, Lusztig considered integral forms close to these quantum groups, divided powers versions and finite-dimensional Frobenius-Lusztig kernels, in connection with his studies of representation theory of algebraic groups in positive characteristic \cite{Lusztig-book}.

All these versions of quantum groups are \emph{pointed} Hopf algebras. We recall that a coalgebra is called pointed if any simple subcoalgebra is one-dimensional, and is the dual notion of a basic algebra. Looking at a Hopf algebra $H$, it means that the coradical $H_0$ (the sum of all simple subcoalgebras) becomes the group algebra of the set $G(H)$ of group-like elements once we take into account the product. At the level of tensor categories, it means that $\comod{H}$ is pointed: every simple object $X$ is invertible. Since the late nineties the classification of pointed Hopf algebras and the exploration of connections with other topics were intensively considered. The objective of this paper is to recall the main points about the classification, stating those subfamilies already classified as well as the open questions and connections with other problems. Indeed, Section \ref{sec:classif-pointed-Hopf} contains the relevant definitions and a description of the aforementioned Lifting Method, with an emphasis on describing Nichols algebras. In addition, it reviews the solutions obtained on the classification of finite-dimensional Nichols algebras over finite groups, which is the first step of the Method, solutions to the generation in degree one problem, and finally the results already obtained on liftings together with their relation to cocycle deformations, which led to the strongest results on the classification of pointed Hopf algebras.

In Section \ref{sec:application-classif} we will describe three problems related to pointed Hopf algebras, mainly using the mentioned categorical point of view. First, the natural generalisation in the context of tensor categories, which is the classification of finite pointed tensor categories. We will see that, in the case of abelian groups, there have been significant advances, and that in some sense, the classification reduces to problems of Hopf algebras, or even more so, their associated Nichols algebras. Next, we will describe examples that fall outside the scope of the previous LIfting Method, corresponding to Hopf algebras generated by the coradical, which properly contain it: We will use the previous classification to give basic Hopf algebras, and also to classify finite-dimensional Nichols algebras over these basic Hopf algebras. Finally, we will provide some tools that may be useful for describing all module categories over the tensor category of representations of a pointed Hopf algebra as before.

\subsection*{Notation} 
We denote by $\N_0$ the set of non-negative numbers. For each $\theta\in\N_0$, $(\alpha_i)_{1\le i\le\theta}$ denotes the canonical basis of $\Z^{\theta}$, i.e. $\alpha_i$ has $i$-entry equal to $1$, and all the other entries are $0$. We denote by $\preceq$ the partial order on $\Z^{\theta}$ given by: $(a_1,\cdots,a_{\theta})\preceq(b_1,\cdots,b_{\theta})$ if and only if $a_i\le b_i$ for all $i$.

All vector spaces, tensor products, etc. are over $\Bbbk$. By algebra we mean an associative $\Bbbk$-algebra with unit.
Given an algebra $A$ and a subspace $S\subset A$, $\Bbbk\langle S\rangle$ denotes the subalgebra of $A$ generated by $S$.
For each group $G$, $\Bbbk G$ denotes the group algebra.

Given a coalgebra $(C,\Delta,\epsilon)$ we use Sweedler notation for the coproduct and any left $C$-comodule $(M,\delta)$: 
\begin{align*}
\Delta(c) &= c\sw{1}\otimes c\sw{2}, \quad c\in C, & \delta(m) &= m\sw{-1}\otimes m\sw{0}\in C\otimes M, \quad m\in M.
\end{align*}

\section{On the classification of finite-dimensional pointed Hopf algebras.}
\label{sec:classif-pointed-Hopf}

Next we recall some invariants related with Hopf algebras needed to introduce the main strategy for the classification of pointed Hopf algebras, the so-called Lifting Method; we refer to \cite{Radford-book} for unexplained notation on this topic. Next, we will present the known solutions to the steps of the Lifting Method, with emphasis on the results on finite-dimensional Nichols algebras over finite groups, and finally we will state different classification results for pointed Hopf algebras, obtained by following the steps of this method, and open questions.

\subsection{The Lifting method.}\label{subsec:lifting-method}

Recall that the coradical $C_0$ of a coalgebra $C$ is the sum of all simple subcoalgebras. Starting on $C_0$ we can define recursively the terms $(C_n)_{n\ge 0}$ of the coradical filtration as follows: 
\begin{align*}
C_n&=C_0\wedge C_{n-1} = \Delta^{-1}(C_0\otimes C+C\otimes C_{n-1}), & &n\ge 1. 
\end{align*}
This is an exhaustive coalgebra filtration, i.e. 
\begin{align*}
\Delta(C_n) & \subseteq \sum_{j=0}^n C_j\otimes C_{n-j}\text{ for all }n\ge 0, && \text{and} & C&=\cup_{n\ge 0} C_n. 
\end{align*}
The associated graded vector space $\gr{C}\coloneqq \oplus_{n\ge 0} C_n/C_{n-1}$, where $C_{-1}=0$, is a coradically graded coalgebra: it means that the coradical filtration of $\gr{C}$ is the filtration associated to the graduation.

We say that $c\in C$ is a group-like element if $\Delta(c)=c\otimes c$, and denote by $G(C)$ the set of all group-like elements of $C$. These elements are in correspondence with one-dimensional subcoalgebras, so $\Bbbk G(C) \subseteq C_0$. In case that $\Bbbk G(C)=C_0$ we say that $C$ is \emph{pointed}.

\smallbreak

When we consider a Hopf algebra $H$, the coradical filtration $(H_n)_{n\ge 0}$ is not always a Hopf algebra filtration. Indeed, it is so if and only if $H_0$ is a Hopf subalgebra. We will come back to the case in which the coradical is not a subalgebra in \S \ref{subsec:application-classif-wo-chevalley}. But for the rest of this subsection \textbf{we assume that $H_0$ is a Hopf subalgebra}: This is the case when $H$ is pointed, since $G(H)$ is a group (with the multiplication of $H$) and $\Bbbk G(H)=H_0$. 

Under this assumption $\gr{H}$ is a coradically graded Hopf algebra, $H_0$ is a cosemisimple Hopf algebra and the inclusion $\iota: H_0\hookrightarrow \gr{H}$ and the projection $\pi:\gr{H}\to H_0$ are Hopf algebra maps such that $\pi\circ\iota=\id{H_0}$. Due to Radford-Majid Theorem \cite{Radford-book}, $\gr{H}$ can be reconstructed as $\gr{H}\simeq R\# H_0$, where $R\coloneqq \gr{H}^{\co \pi}$ is the subalgebra of left coinvariants, which is a coradically graded connected Hopf algebra in the category $\yd{H_0}$ of left $H_0$-Yetter Drinfeld modules with the restriction of the multiplication, a suitable coproduct $\ude:R\to R\otimes R$ in $\yd{H_0}$ and the grading $R=\oplus_{n\ge 0} R^n$, $R^n\coloneqq R\cap \gr{H}^n$. The object $R^1\in\yd{H_0}$ is called the \emph{infinitesimal braiding}, the algebra $R\in\yd{H_0}$ is the \emph{diagram} and the subalgebra $\cB(R^1)\coloneqq\Bbbk\langle R^1\rangle$ generated by $R^1$ is the associated \emph{Nichols algebra}: We will come back to this last object in \S \ref{subsec:Nichols-algs}. If $H$ is finite-dimensional, then $R$, and a fortiori $\cB(R^1)$, are so.

\smallskip

\subsubsection{Yetter-Drinfeld modules.} Before introducing Lifting Method, we recall the definition and properties of the category $\yd{K}$, for $K$ a Hopf algebra with bijective antipode:
\begin{itemize}[leftmargin=*]
	\item The objects $V$ of $\yd{K}$ are left $K$-modules and $K$-comodules, whose action and coaction satisfy that
	\begin{align}\label{eq:YD-compatibility}
	\delta(h\cdot v) &= h\sw{1}v\sw{-1}\cS(h\sw{3}) \otimes h\sw{2}\cdot v\sw{0}, & \text{for every }& h\in K, \, v\in V.
	\end{align}
	The maps are simultaneously $K$-linear and $K$-colinear.
	\item The tensor product of two Yetter-Drinfeld modules is a Yetter-Drinfeld module, as well as the dual, with the usual structure of modules and comodules on the tensor product and the dual. Thus, $\yd{K}$ is a monoidal category, and its subcategory of finite-dimensional objects is a tensor category. Moreover, it is braided, with braiding $c_{V,W}:V\otimes W\to W\otimes V$ given by 
	\begin{align}\label{eq:YD-braiding}
		c_{V,W}(v\otimes w) &= v\sw{-1}\cdot w \otimes v\sw{0} & \text{for every }&v\in V, \, w\in W.
	\end{align}
	This allows to consider Hopf algebras in $\yd{K}$.
	\item If $K$ is semisimple and finite-dimensional, then the category $\yd{K}$ is so, as we assume $\car \Bbbk=0$. In particular, this happens when $K=\Bbbk G$, where $G$ is a finite group.
\end{itemize}

\begin{example}\label{exa:YD-groups}
Let $G$ be a group. A Yetter-Drinfeld module $V\in\yd{\Bbbk G}$ is a $G$-graded vector space $V=\oplus_{g\in G}V_g$, which is simultaneously a $G$-module, and such that $h\cdot V_g=V_{hgh^{-1}}$ for all $g,h\in G$. When $G$ is finite, as said above, the category is semisimple, and simple Yetter-Drinfeld modules are parametrized by pairs $(\cO,\rho)$, where $\cO$ is a conjugacy class of $G$ and $\rho:G^h\to\operatorname{Aut}(U)$ is an irreducible representation of $G^h$, the centralizer of an element $h\in \cO$. The corresponding object $M(\cO,\rho)\in\yd{\Bbbk G}$ is explicitly described as follows. As $G$-module, 
\begin{align*}
M(\cO,\rho) &= \operatorname{Ind}^G_{G^h} \rho =\Bbbk G \otimes_{\Bbbk G^h} U.
\end{align*}
Let $\{h_1=h,h_2\cdots, h_m\}$ be an enumeration of $\cO$, and for each $i=1,\cdots,m$, let $z_i\in G$ be such that $h_i=z_ihz_i^{-1}$, with the convention $z_1=e$.
Hence, $M(\cO,\rho) =\oplus_{i=1}^m \Bbbk z_i\otimes U$ and the action is as follows:
\begin{align*}
g\cdot (z_i \otimes W) &= z_j\oplus \rho(g')(w), & \text{where }&  gz_i=z_jg' , &  \text{for each }& g\in G, \, 1\le i\le m, \, w\in W.
\end{align*}
For the action $\delta: M(\cO,\rho)\to \Bbbk G\otimes M(\cO,\rho)$ we simply have:
\begin{align*}
\delta(z_i\otimes w) &=  h_i \otimes (z_i\otimes w),  &  \text{for each }& 1\le i\le m, \, w\in W.
\end{align*}
\end{example}

\subsubsection{The method.} In accordance with what has already been described, we have collected some invariants such as the coradical $H_0$, the diagram $R$, the infinitesimal braiding $R^1$ and the Nichols algebra $\cB(R^1)$. The Lifting Method, originally defined by Andruskiewitsch and Schneider in \cite{AnSch-liftings}, proposes to address the classification of Hopf algebras with a fixed finite-dimensional coradical $K$ (a cosemisimple Hopf algebra, which is also semisimple as $\car \Bbbk=0$) by reversing the steps we took above. In effect, we seek to solve the following problems:
\begin{enumerate} [leftmargin=*,label=\rm{LM\arabic*)}]
\item\label{item:lifting-method-Nichols} Find all $V\in\yd{K}$ such that $\dim\cB(V)<\infty$.

\item\label{item:lifting-method-gen-deg-one} For each $V\in\yd{K}$ as in the previuos step, determine all coradically graded connected Hopf algebras $R=\oplus_{n\ge 0} R^n$ such that $R^1=V$.

Related with this step there is a famous conjecture made by Andruskiewitsch and Schneider \cite{AnSch-Annals}:

\begin{conj}
Assume that $K=\Bbbk G$, where $G$ is a finite group. If $R=\oplus_{n\ge 0} R^n\in\yd{K}$ is a coradically graded connected Hopf algebra such that $\dim R<\infty$, then $R$ is generated as an algebra by $R^1$; in other words, $R=\cB(R^1)$.
\end{conj}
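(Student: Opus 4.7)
The plan. Set $V := R^1$ and consider the subalgebra $S := \cB(V) = \Bbbk\langle V \rangle \subseteq R$, a graded Hopf subalgebra of $R$ in $\yd{\Bbbk G}$ with $S^1 = V = R^1$. Proving $R = \cB(R^1)$ splits into two halves: (i) show $R$ is generated as an algebra by $V$, equivalently that the canonical map $T(V) \twoheadrightarrow R$ (well defined because $V \subseteq P(R)$) is surjective; (ii) identify the resulting quotient with $\cB(V)$. The second half is cleaner: writing $R = T(V)/J'$ and $\cB(V) = T(V)/J$ with $J' \subseteq J$ graded Hopf ideals in $\yd{\Bbbk G}$, suppose $J' \subsetneq J$ and let $n \ge 2$ be minimal with $J'_n \subsetneq J_n$. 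By the iterative construction of $\cB(V)$ — obtained from $T(V)$ by successively killing homogeneous primitives of degree $\ge 2$ — the image of $J_n$ in $T(V)/J_{<n}$ consists of primitive elements of degree $n$. Since $J_{<n} \subseteq J'$ by minimality of $n$, these descend to primitive elements of degree $n$ in $R$; but $R$ is coradically graded, so $P(R) = R^1$ and $n \ge 2$ forces them to vanish, a contradiction.

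The first half is the real substance of the conjecture. Following Angiono's strategy in the abelian setting, I would exploit that for $G$ abelian the YD-module $V$ decomposes into $1$-dimensional subobjects (by Schur, over an algebraically closed field of characteristic zero), so $V$ carries a \emph{diagonal} braiding. Heckenberger's classification of finite-dimensional Nichols algebras of diagonal type then furnishes, for each such $V$, an explicit generating set for the defining ideal $J$ (quantum Serre-type relations and powers of root vectors indexed by the associated arithmetic root system), together with a PBW basis of $\cB(V)$ built from Lyndon words. Using this PBW basis one computes $\dim \cB(V)$ explicitly; then, via the braided-commutator calculus in $T(V)$ and a duality argument matching $R$ with $R^*$ in $\yd{\Bbbk G}$, one argues that $R$ cannot admit algebra generators outside $V$ without forcing $\dim R > \dim \cB(V)$, contradicting finite-dimensionality together with the inclusion $S \subseteq R$ that matches in low degrees.

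The principal obstacle is the non-abelian case: no classification of finite-dimensional Nichols algebras in $\yd{\Bbbk G}$ is known in general — this is precisely the open step \ref{item:lifting-method-Nichols} — so no explicit presentation of $\cB(V)$ is at our disposal, and the PBW/Lyndon-word machinery that drives the abelian proof has no direct substitute. For this reason the conjecture remains open for general finite $G$, having been established only in restricted families such as diagonal type (Angiono) and selected conjugacy-class Nichols algebras; a uniform proof will likely require either new structural insights into $\cB(V)$ valid beyond the diagonal case or an argument bypassing the need for an explicit presentation altogether.
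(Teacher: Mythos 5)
The statement you are addressing is presented in the paper as a \emph{conjecture} (the Andruskiewitsch--Schneider conjecture), and the paper gives no proof of it: it is open for a general finite group $G$. Your proposal is honest about this, so let me separate what you actually have from what is missing. Your part (ii) is correct and essentially complete: once one knows that $R$ is generated by $R^1=V$, the identification $R=\cB(V)$ follows from the uniqueness of the Nichols algebra, and your argument --- a minimal degree $n\ge 2$ where the ideals differ would produce a nonzero homogeneous primitive of degree $n$ in $R$, contradicting $\cP(R)=R^1$ for a coradically graded $R$ --- is the standard way to see it. So the entire content of the conjecture is your part (i), generation in degree one.

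For part (i) you only supply a plan, and even in the abelian case the plan omits the technical core. The sentence about ``braided-commutator calculus \dots and a duality argument \dots one argues that $R$ cannot admit algebra generators outside $V$ without forcing $\dim R>\dim\cB(V)$'' is where the whole proof of Theorem \ref{thm:gen-deg-one-diagonal} lives: one passes to the graded dual $R^*$, which \emph{is} generated in degree one (a finite-dimensional pre-Nichols algebra of $V^*$, i.e.\ an intermediate quotient $T(V^*)\twoheadrightarrow R^*\twoheadrightarrow\cB(V^*)$), and then one must verify, relation by relation against the explicit presentation of Theorem \ref{thm:Nichols-diag-defn-rels}, that each defining relation of $\cB(V^*)$ --- the quantum Serre relations $(\ad_c x_i)^{1-c_{ij}}x_j$, the powers $x_\beta^{N_\beta}$ of root vectors, and the sporadic low-rank relations --- already vanishes in any finite-dimensional pre-Nichols algebra, whence $R^*=\cB(V^*)$ and $R=\cB(V)$ by a dimension count. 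None of that verification is sketched, and it is exactly where Heckenberger's classification and the presentation are used. For non-abelian $G$ the known results (Theorem \ref{thm:gen-deg-one-non-abelian}) cover only the Fomin--Kirillov braidings for $n=3,4$, the module $\HV{1}$, and the almost diagonal case; no uniform argument bypassing an explicit presentation is known. In short: your text is an accurate survey of the state of the art, but it does not prove the statement, which remains a conjecture.
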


Notice that $R$ is generated by $R^1$ if and only if $R\# K$ is generated as an algebra by $R^1$ and $K$, where $R^1$, respectively $K$, is identified with $R^1\# 1$, respectively $1\# K$, as a subspace of $R\# K$.

\item\label{item:lifting-method-deformations} For each $R$ as in \ref{item:lifting-method-gen-deg-one}, find all Hopf algebras $H$ such that $\gr{H}\simeq R\# K$.
\end{enumerate}

There is a related problem, which was the key step to solve \ref{item:lifting-method-gen-deg-one} and \ref{item:lifting-method-deformations} for the known cases, as we will discuss later in \S \ref{subsec:classif-abelian} and \ref{subsec:classif-non-abelian}:

\begin{enumerate} [leftmargin=*,label=\rm{LM\arabic*')}]
\setcounter{enumi}{1}
\item\label{item:lifting-method-presentation} For each $V\in\yd{K}$ such that $\dim\cB(V)<\infty$, give a presentation by generators and relations of $\cB(V)$.
\end{enumerate}

In the remaining part of this section we will present the known solutions for these steps when $K=\Bbbk G$, with $G$ finite; that is, we will present the state of art of the classification of finite-dimensional pointed Hopf algebras.

\smallbreak

\subsubsection{Cocycle deformations.} Now we give properties of cocycle deformations of Hopf algebras that would be useful later. Recall that a $2$-cocycle on $H$ is a convolution invertible linear map $\sigma:H\otimes H\to \Bbbk$ such that 
\begin{align*}
\sigma(x,1)&=\sigma(1,x)=\epsilon(x), & 
\sigma(x\sw{1},y\sw{2})\sigma(x\sw{2}y\sw{2},z)&=\sigma(y\sw{1},z\sw{1})\sigma(x,y\sw{2}z\sw{2})
\end{align*}
for all $x,y,z\in H$.
Given a $2$-cocycle $\sigma$, set $\cdot_{\sigma}: H\otimes H\to H$ as the map
\begin{align*}
 x\cdot_{\sigma} y & \coloneqq \sigma(x\sw{1},y\sw{1})x\sw{2}y\sw{2}\sigma^{-1}(x\sw{3},y\sw{3}), & x,y & \in H.
 \end{align*}
This map defines a new associative product on $H$, such that $(H,\cdot_{\sigma}, 1,\Delta,\epsilon,\mathcal{S}_{\sigma})$ is a Hopf algebra, for an appropriate antipode $\mathcal{S}_{\sigma}$.
The importance of this notion lies in the fact that the categories of comodules of two Hopf algebras $H$ and $H'$ are tensor equivalent if and only if there exists a $2$-cocycle $\sigma$ on $H$ such that $H'\simeq H_{\sigma}$.

As the coalgebra structure of $H$ and $H_{\sigma}$ is the same, the coradical filtration coincides. In particular, if $H$ is pointed, then $H_{\sigma}$ is so. In \cite{AAGMV} some mild conditions were given on $\sigma$ in order to show that the associated graded Hopf algebras of $H$ and $H_{\sigma}$ coincide. This points towards the idea of constructing all liftings as in \ref{item:lifting-method-deformations} using $2$-cocycles or, equivalently, the so called Hopf-Galois objects. The same work sets out a strategy for this purpose.

\subsection{Nichols algebras.}\label{subsec:Nichols-algs}

Let $K$ be any Hopf algebra, not necessarily semisimple.
As mentioned before, a \emph{Nichols algebra} associated to a Yetter-Drinfeld module $V\in\yd{K}$ is a graded Hopf algebra $\cB(V)=\oplus_{n\ge 0}\cB^n(V)$ in $\yd{K}$, which is connected, i.e. $\cB^0(V)=\Bbbk 1$, coradically graded and is generated by $\cB^1(V)\simeq V$. These conditions are equivalent to the following
\footnote{An element $x$ in a Hopf algebra $\cB$ is \emph{primitive} if $\ude(x)=x\otimes 1+1\otimes x$, and $\cP(\cB)$ denotes the set of all primitive elements of $\cB$.}
\begin{align}
\cB^0(V)&=\Bbbk 1, & \cB^1(V)&=\cP(\cB(V))\simeq V, & &\text{and }\cB^1(V)\text{ generates }\cB(V)\text{ as algebra.}
\end{align}
One can check that an algebra $\cB\in\yd{K}$ with the properties above is unique. In fact, another way to describe $\cB(V)$ is as a quotient $\cB(V)=T(V)/\cI(V)$, where $\cI(V)$ is the maximal graded Hopf ideal of the tensor algebra $T(V)$ generated by elements of degree $\ge 2$ (i.e. the sum of all Hopf ideals with this property).

In addition, the structure of $\cB(V)$ depends really on the braiding map $c_{V,V}:V\otimes V\to V\otimes V$. Indeed, one can define the Nichols algebra $\cB(V)$ for each braided vector space $(V,c)$. We refer to \cite{A-Nichols} and the references therein for more information about Nichols algebras and the unexplained definitions. 

\medbreak

A solution to \ref{item:lifting-method-Nichols} for a fixed semisimple Hopf algebra $K$ involves the description of the defining ideal $\cI(V)$, or a more appropriate inviariant controlling the dimension of $\cB(V)$. Also, an inclusion $W\hookrightarrow V \in \yd{K}$ induces an inclusion $\cB(W)\hookrightarrow \cB(V)$, due to the universal property of Nichols algebras, so we usually argue recursively to solve the referred step of the Lifting Method. Next we discuss the case $K=\Bbbk G$, for $G$ a finite group.

\subsubsection{The Weyl groupoid.}

Next we introduce useful invariants to determine when a Nichols algebra is finite-dimensional, and in such case, to get the dimension. They are the Weyl groupoid and the associated root system. We refer mainly to \cite{HS-book}, but most of the results included here can be also found in \cite{AHS-american}.

\smallbreak

Fix $\theta\in\N$. Set $\I=\{1,\cdots,\theta\}$ and $\cF^{K}_{\theta}$ as the collection of all direct sums $\oplus_{i\in\I} V_i\in\yd{K}$, where $\dim V_i<\infty$. Given $V=\oplus_{i\in\I} V_i\in\cF^H_{\theta}$, the Nichols algebra $\cB(V)$ is $\N_0^{\theta}$-graded: $\cB(V)=\oplus_{\alpha\in\N_0^{\theta}} \cB(V)_{\alpha}$, where $V_i =\cB(V)_{\alpha_i}$.

\textbf{For the rest of this subsection we assume that $V_j\in\yd{K}$ is simple for all $j\ne i$. }
Let $i\in\I$. We say that $V$ is \emph{$i$-finite} if for all $j\ne i$ there exists $m\ge 1$ such that $(\ad V_i)^m V_j=0$ in $\cB(V)$. If so, then we set:
\begin{align}\label{eq:weyl-gpd-aij-defn}
c_{ij}^V & \coloneqq 
\begin{cases}
2, & j=i, \\ 
-\max \{ n\in\N_0 | (\ad V_i)^n V_j\ne 0\}, &   i\ne j;
\end{cases}
&  
\rho_i(V)_j &\coloneqq \begin{cases} V_i^*, & j=i, \\ 
(\ad V_i)^{-c_{ij}^V} V_j, & i\ne j. \end{cases}
\end{align}
Notice that $\rho_i(V)\in\cF^H_\theta$: it is called the \emph{$i$-reflection} of $V$. Also, 
\begin{align*}
c_{ij}^V = -\max\{n\in\N_0 | \cB(V)_{n\alpha_i+\alpha_j}\ne 0\}.
\end{align*}

\begin{remark}\label{rem:i-reflection-relation}	\cite[Corollary 13.4.3 \& Theorem 13.3.9]{HS-book}
For all $j\ne i$, $\rho_i(V)_j\in\yd{K}$ is simple. In addition, there is a close relation between the Nichols algebras $\cB(V)$ and $\cB(\rho_i(V))$.
\end{remark}

Assume that $V$ is $i$-finite for all $i$. Then we set $C^V\coloneqq (a_{ij}^V)\in\Z^{\theta\times\theta}$, which is a generalized Cartan matrix \cite[Lemma 13.4.4]{HS-book}, and define the symmetries $s_i^V:\Z^{\theta}\to\Z^{\theta}$,
$s_i^V(\alpha_j)=\alpha_j-c_{ij}^V\alpha_i$ for all $j\in\I$. If $C^V$ is decomposable, i.e. there exists a partition $\{1,\cdots,\theta\}=I_1\cup I_2$, where $c_{ij}^V=0$ for all $i\in I_1$ and $j\in I_2$ such that
\begin{align}\label{eq:Nichols-decomposition}
\cB(V) &\simeq \cB(U_1)\otimes \cB(U_2), & \text{where }& U_{\ell} \coloneqq \oplus_{j\in I_{\ell}} V_j, \, \ell=1,2.
\end{align}
We refer to \cite{AGr-Advances} for a proof. Thus, in order to study finite-dimensional examples, it is enough to just consider the case in which $C^V$ is indecomposable. If so we say that $V$, or either $\cB(V)$ as well, is \emph{braided indecomposable}.

\smallbreak

We say that $V$ \emph{admits all reflections} if $V$ is $i$-finite for all $i$, so $\rho_i(V)$ is well defined for all $i$, and recursively
$\rho_{i_k}\cdots \rho_{i_1}(V)$ is $i$-finite for all $i, i_1,\cdots,i_k\in\I$ and all $k\in\N$.

In this case, we can attach to $V$ a semi-Cartan graph $\cG(V)=\cG(\I,\cX,\rho,(A^W)_{W\in\cX})$, where $\cX$ is the subset of all $W=\oplus_{i\in\I}W_i$ obtained by applying $\rho_i$'s to $V$, up to isomorphism, and $A^W$ is defined as above.

Following \cite[Definition 13.6.3]{HS-book}, the \emph{Weyl groupoid} $\cW(V)$ of $V$ is the subgroupoid of $\cX\times \GL{\Z^{\theta}}\times\cX$ generated by $\sigma_i^W\coloneqq (W,s_i^W,\rho_i(W))$, with $W\in\cX$, $i\in\I$. Thus, every element of $\cW(V)$ is of the form
$(W,\omega,W')$, where $W\in\cX$, $\omega=s_{i_1}^Ws_{i_2}\cdots s_{i_k}$ for some $i_j\in\I$ and $W'=\rho_{i_{k}}\cdots \rho_{i_1}(W)$.
Here, $s_{i_1}^Ws_{i_2}\cdots s_{i_k}$ is a short notation for 
$s_{i_1}^Ws_{i_2}^{\rho_{i_1}(W)}\cdots s_{i_k}^{\rho_{i_{k-1}}\cdots \rho_{i_1}(W)}$ 
as the upper indices are univocally determined. The Yetter-Drinfeld modules $W$ and $W'$ are called \emph{Weyl equivalent} as each one of them is obtained from the other by a chain of reflections $\rho_i$.

As every $\omega$ such that $(W,\omega, W')\in\cW(V)$ is a product of reflections $s_i^U$, we can define the length $\ell(\omega)$ as the minimal number of reflections needed for such an expression. We say that an expression $\omega=s_{i_1}^Ws_{i_2}\cdots s_{i_k}$ is \emph{reduced} when $k=\ell(\omega)$.
Now we set
\begin{align}\label{eq:defn-real-roots}
	\varDelta_{\re}^W & \coloneqq \left\{ \omega(\alpha_i) \, | \, i\in\I, \, W'\in\cX, \, (W,\omega,W')\in\cW(V) \right\} \subset \Z^{\theta}.
\end{align}

\begin{theorem}\cite[14.4.14]{HS-book}
If $V$ admits all reflections and $\cG(V)$ is finite (in particular, if $\dim\cB(V))<\infty$), then $\cG(V)$ is a Cartan graph.
In this case, $\varDelta(V)\coloneqq (\varDelta_{\re}^W)_{W\in\cX}$ is a finite root system as in \cite{HY-Coxetergpd,CH-fin-Weylgpd}.
\end{theorem}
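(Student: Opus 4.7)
The plan is to verify the axioms of a Cartan graph and those of a finite root system separately, using the reflection data attached to $V$ together with the close relation between $\cB(W)$ and $\cB(\rho_i(W))$ encoded in Remark~\ref{rem:i-reflection-relation}. The proof proceeds in three stages, each corresponding to one of the two statements in the theorem.

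First, I would establish that $\cG(V)$ satisfies the axioms of a Cartan graph in the sense of \cite{HS-book}. Since $V$ admits all reflections by hypothesis, every $W\in\cX$ is $i$-finite, so the Cartan matrices $A^W=(c_{ij}^W)$ from \eqref{eq:weyl-gpd-aij-defn} are well defined. The key compatibility $c_{ij}^{\rho_i(W)}=c_{ij}^W$, together with the involutivity $\rho_i(\rho_i(W))\simeq W$, follows from Remark~\ref{rem:i-reflection-relation}: the identification between $\cB(W)$ and $\cB(\rho_i(W))$ matches the $\N_0^{\theta}$-graded components that determine the extremal exponents appearing in the definition of $c_{ij}^W$.

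Second, I would construct the root system $\varDelta(V)$ and verify the axioms of a root system of type $(\cX,A^{\bullet})$. For each $W\in\cX$, the set $\varDelta_{\re}^W$ is defined by \eqref{eq:defn-real-roots}; it contains the simple roots $\alpha_i$ (take $\omega=\id{}$) and it is stable under the symmetries $s_i^W$ by construction. The delicate axiom is positivity: every $\beta\in\varDelta_{\re}^W$ lies in $\N_0^{\theta}\cup(-\N_0^{\theta})$. I would argue by induction on the length $\ell(\omega)$ of a reduced expression $\omega=s_{i_1}^W s_{i_2}\cdots s_{i_k}$ with $\omega(\alpha_i)=\beta$, combining the exchange condition for Coxeter groupoids from \cite{HY-Coxetergpd,CH-fin-Weylgpd} with the positivity of the $\N_0^{\theta}$-grading of $\cB(W')$, transported from one point of $\cX$ to another using the categorical identifications from Remark~\ref{rem:i-reflection-relation}.

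Finally, the finiteness of $\cG(V)$ forces both $\cX$ and the list $\{A^W\}_{W\in\cX}$ to be finite; by the combinatorial theory of Coxeter groupoids \cite{HY-Coxetergpd,CH-fin-Weylgpd}, this is equivalent to the finiteness of $\cW(V)$ and of each $\varDelta_{\re}^W$. Packaging these together, $\varDelta(V)=(\varDelta_{\re}^W)_{W\in\cX}$ becomes a finite root system as claimed. For the parenthetical assertion, if $\dim\cB(V)<\infty$ then for every $W\in\cX$ only finitely many graded components $\cB(W)_{\alpha}$ are nonzero; this bounds the entries $c_{ij}^W$ uniformly and forces $\cX$ to be finite as well, reducing to the previous case. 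The main obstacle will be the positivity/exchange step, since unlike for an ordinary Coxeter group the reflections move between different objects of $\cX$, and one must carefully merge the exchange-lemma combinatorics with the Nichols-algebra identifications at each reflection.
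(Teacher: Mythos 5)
The paper does not prove this statement: it is quoted from \cite[14.4.14]{HS-book} without argument, so there is no in-paper proof to compare yours against. Judged on its own, your outline follows the standard Heckenberger--Schneider strategy (well-definedness and reflection-compatibility of the Cartan matrices, positivity of real roots via the $\N_0^{\theta}$-grading, then finiteness), but two points need attention.

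First, the step you yourself flag as ``the main obstacle'' is essentially the entire content of the theorem, and your sketch does not supply the lemma that resolves it. The positivity axiom is not obtained from the exchange condition for Coxeter groupoids --- that condition is a \emph{consequence} of already having a root system, so invoking it here is circular. What is actually needed is the Nichols-algebra statement that $s_i^W$ restricts to a bijection between the nonzero $\N_0^{\theta}$-degrees of $\cB(W)$ lying outside $\N_0\alpha_i$ and the corresponding degrees of $\cB(\rho_i(W))$; this comes from the explicit reflection decomposition of $\cB(W)$ as a braided bosonization over $\cB(W_i)$, which is the precise content behind Remark \ref{rem:i-reflection-relation}. With that lemma in hand, every real root $\omega(\alpha_i)$ is, up to a global sign, the degree of a nonzero homogeneous component of $\cB(W)$, and both positivity and (in the finite-dimensional case) finiteness follow at once. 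Without it, your induction on $\ell(\omega)$ has nothing to run on.

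Second, your finiteness paragraph misreads the hypothesis. In \cite{HS-book}, ``$\cG(V)$ is finite'' means that the sets $\varDelta_{\re}^W$ are finite, not that $\cX$ is a finite set, and the equivalence you assert --- that finiteness of $\cX$ and of the Cartan data is equivalent to finiteness of $\cW(V)$ and of each $\varDelta_{\re}^W$ --- is false: an affine generalized Cartan matrix yields a semi-Cartan graph with a single object and perfectly finite data, yet with infinite Weyl groupoid and infinitely many real roots. For the parenthetical assertion, bounding the entries $c_{ij}^W$ and making $\cX$ finite therefore does not bound the root system; the correct argument is again that the real roots sit inside the finitely many nonzero degrees of the finite-dimensional algebras $\cB(W)$, $W\in\cX$.
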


As we are focused in the finite-dimensional case, we can apply the theory of finite root systems. By \cite{HY-Coxetergpd,CH-fin-Weylgpd}
there exists a unique root system attached to $\cG(V)$, and we denote it symply by $\varDelta^V$ (we avoid to mention that every root is real, as this is the unique case as for finite Weyl groups). In addition:
\begin{itemize}
	\item $\varDelta^V=\varDelta^V_+\cup \varDelta^V_-$, where $\varDelta^V_{\pm}=\varDelta^V\cap(\pm \N_0)^{\theta}$, and $\varDelta^V_-=-\varDelta^V_+$.
	\item There exists a unique element $\omega_0^V$ of maximal length starting in $V$.
	\item Fix a reduced expression $\omega_0^V=s_{i_1}^Vs_{i_2}\cdots s_{i_M}$ and set $\beta_k\coloneqq s_{i_1}^V\cdots s_{i_{k-1}}(\alpha_{i_k})$, $1\le k\le M$. Then $\beta_k\in\varDelta^V_+$ for all $k$, they are pairwise different, and moreover $\varDelta^V_+=\{\beta_k | 1\le k\le M \}$.
\end{itemize}
Combining the root system with the Yetter-Drinfeld structure, we set
\begin{align*}
	V_{\beta_k} & \coloneqq \rho_{i_{k-1}}\cdots \rho_{i_1} (V)_{i_k}\in\yd{K}, & &1\le k\le M.
\end{align*}
Following the definitions in \cite{HS-book} again, we can see $V_{\beta_k}$ as a subspace of $\cB(V)_{\beta_k}$ for all $k$.

\begin{theorem}\cite[\S 14.4 \& 14.5]{HS-book}\label{thm:Nichols-decomposed-PBW}
If $\dim\cB(V))<\infty$, then for each $k\in\{1,\cdots, M\}$ the subalgebra $\Bbbk\langle V_{\beta_k}\rangle$ is isomorphic as an algebra to $\cB(V_{\beta_k})$. The multiplication gives an isomorphism $\cB(V) \simeq \Bbbk\langle V_{\beta_1}\rangle \otimes \cdots \otimes \Bbbk\langle V_{\beta_M}\rangle$.
\end{theorem}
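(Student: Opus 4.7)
The plan is to proceed by induction on the length $M = \ell(\omega_0^V)$ of the longest element of the Weyl groupoid starting at $V$. The base case $M=1$ forces $\theta=1$ and $V = V_{\beta_1}$, so $\cB(V) = \Bbbk\langle V_{\beta_1}\rangle = \cB(V_{\beta_1})$ trivially. For the inductive step, the strategy is to peel off one factor by applying a single reflection and then invoke the inductive hypothesis on the reflected Nichols algebra.

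The crucial ingredient is a \emph{one-step reflection decomposition}: if $V$ is $i$-finite with $\dim\cB(V)<\infty$, then there is a tensor product decomposition of $\N_0^{\theta}$-graded objects
\begin{equation*}
\cB(V) \;\simeq\; \cB(V_i)\otimes K_i,
\end{equation*}
where $K_i\subset \cB(V)$ is a right coideal subalgebra, and, after twisting the $\Z^{\theta}$-grading by the reflection $s_i^V$, $K_i$ is isomorphic as a braided Hopf algebra to $\cB(\rho_i(V))$. I would establish this using the skew-primitive derivations on $\cB(V)$ dual to a basis of $V_i$: the $i$-finiteness of $V$ guarantees that $\cB(V)$ decomposes into finite-dimensional $(\ad V_i)$-isotypic components, the common kernel of the skew derivations gives a right coideal subalgebra $K_i$, and the identification with $\cB(\rho_i(V))$ is forced by checking that $K_i$ is connected, coradically graded, and generated in degree one by the subspaces $(\ad V_i)^{-c_{ij}^V}V_j$ for $j\ne i$. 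This is precisely the content of Remark~\ref{rem:i-reflection-relation}.

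Taking $i = i_1$ (the first index of the fixed reduced expression $\omega_0^V = s_{i_1}^V s_{i_2}\cdots s_{i_M}$), the one-step decomposition yields $\cB(V) \simeq \Bbbk\langle V_{\beta_1}\rangle \otimes K_{i_1}$, where $\beta_1 = \alpha_{i_1}$. Setting $V' = \rho_{i_1}(V)$, the element $s_{i_2}^{V'}\cdots s_{i_M}$ is a reduced expression for $\omega_0^{V'}$ of length $M-1$, so the inductive hypothesis gives
\begin{equation*}
\cB(V') \;\simeq\; \Bbbk\langle V'_{\beta'_2}\rangle \otimes \cdots \otimes \Bbbk\langle V'_{\beta'_M}\rangle, \qquad \beta'_k = s_{i_2}\cdots s_{i_{k-1}}(\alpha_{i_k}).
\end{equation*}
Transporting this decomposition of $\cB(V')$ into $K_{i_1}$ via the reflection isomorphism and noting that $s_{i_1}^V(\beta'_k) = \beta_k$ by definition of the $\beta_k$, the factors match up and we obtain $\cB(V)\simeq \Bbbk\langle V_{\beta_1}\rangle\otimes\cdots\otimes \Bbbk\langle V_{\beta_M}\rangle$. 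The identifications $\Bbbk\langle V_{\beta_k}\rangle \simeq \cB(V_{\beta_k})$ arise by applying the base case of the induction inside each reflected Nichols algebra $\cB(\rho_{i_{k-1}}\cdots\rho_{i_1}(V))$ and then transporting back.

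The main obstacle is the one-step decomposition. Two points demand care: first, checking that $K_i$ is indeed closed under multiplication and is a coideal subalgebra, which uses the explicit formula for the coproduct of elements in $\cB(V)$ together with the Yetter-Drinfeld compatibility \eqref{eq:YD-compatibility}; and second, proving that the graded object $K_i$, once regraded via $s_i^V$, satisfies the universal properties characterising $\cB(\rho_i(V))$, that is, it is connected, coradically graded, and generated by its degree-one part. A further subtlety is independence of the PBW decomposition from the choice of reduced expression; this follows because any two reduced expressions for $\omega_0^V$ in the Weyl groupoid are connected by Coxeter-type relations, under each of which the tensor factorisation is compatible.
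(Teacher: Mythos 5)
The paper itself offers no proof of this theorem; it is quoted from \cite[\S 14.4 \& 14.5]{HS-book}, and your reflection-theoretic induction is exactly the strategy of that source. However, as written your argument has a genuine gap in the key step. Your ``one-step reflection decomposition'' asserts $\cB(V)\simeq\cB(V_i)\otimes K_i$ \emph{and} that $K_i$, regraded by $s_i^V$, is isomorphic to $\cB(\rho_i(V))$. These two claims are incompatible: applying them also to $\rho_i(V)$ (whose $i$-th summand is $V_i^*$ and whose $i$-reflection is $V$ again) would give $\dim\cB(V)=\dim\cB(V_i)\cdot\dim\cB(\rho_i(V))$ and $\dim\cB(\rho_i(V))=\dim\cB(V_i^*)\cdot\dim\cB(V)$, forcing $\dim\cB(V_i)=1$. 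The correct statement behind Remark~\ref{rem:i-reflection-relation} is that $\cB(\rho_i(V))\simeq\cB(V_i^*)\otimes K_i'$ with $K_i'=\cB(\rho_i(V))^{\co \cB(V_i^*)}$, and the reflection isomorphism identifies $K_i$ with $K_i'$ (not with all of $\cB(\rho_i(V))$); the consequence is $\dim\cB(V)=\dim\cB(\rho_i(V))$. Relatedly, $K_i$ is generated by all of $\oplus_{j\ne i}\oplus_{0\le m\le -c_{ij}^V}(\ad V_i)^mV_j$, not only by the top components $(\ad V_i)^{-c_{ij}^V}V_j$.

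This misidentification breaks the induction as you set it up. Since $|\varDelta_+^{\rho_{i_1}(V)}|=|\varDelta_+^{V}|=M$, the longest element $\omega_0^{V'}$ for $V'=\rho_{i_1}(V)$ has length $M$, not $M-1$; the word $s_{i_2}^{V'}\cdots s_{i_M}$ is reduced of length $M-1$ but is \emph{not} $\omega_0^{V'}$, and the corresponding product of $M-1$ factors is only a proper right coideal subalgebra of $\cB(V')$ (of dimension $\dim\cB(V')/\dim\cB(V_{i_1}^*)$), so your inductive hypothesis---a statement about the full Nichols algebra decomposed along its longest word---does not apply to it. Your two errors compensate dimensionally, which is why the bookkeeping appears consistent, but the argument does not close. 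The fix, which is what \cite{HS-book} actually does, is to strengthen the induction: for \emph{every} reduced word $w=s_{i_1}^Vs_{i_2}\cdots s_{i_k}$, multiplication gives an isomorphism from $\Bbbk\langle V_{\beta_1}\rangle\otimes\cdots\otimes\Bbbk\langle V_{\beta_k}\rangle$ onto the right coideal subalgebra attached to $w$; one inducts on $k$, using $K_{i_1}\simeq K_{i_1}'$ to transport the decomposition attached to $s_{i_2}^{V'}\cdots s_{i_M}$ from inside $\cB(V')$ into $K_{i_1}\subset\cB(V)$, and the theorem is the case $w=\omega_0^V$. Your concluding remarks on independence of the reduced expression and on $\Bbbk\langle V_{\beta_k}\rangle\simeq\cB(V_{\beta_k})$ are fine once this is repaired.
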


Thus the Weyl groupoid (and the associated root system) is a key ingredient both to classify finite-dimensional Nichols algebras as well as to describe their structure.

\subsubsection{Nichols algebras over abelian groups.}
\label{subsubsec:classif-abelian}
Assume now that $K=\Bbbk G$, where $G$ is a finite abelian group. According with Example \ref{exa:YD-groups}, each simple Yetter-Drinfeld module is one-dimensional and labelled by a pair $(h,\chi)$, where $h\in G$ and $\chi\in\widehat{G}$: we denote it simply $\Bbbk_h^{\chi}$. 
Thus a Yetter-Drinfeld module $V$ of dimension $\theta$ is, up to isomorphism, a direct sum $V=\oplus_{i=1}^{\theta} \Bbbk_{h_i}^{\chi_i}$ for some $h_i\in G$ and $\chi_i\in\widehat{G}$. Fix non-zero vectors $x_i\in \Bbbk_{h_i}^{\chi_i}$, so $(x_i)_{1\le i\le \theta}$ is a basis of $V$. The action, the coaction and the braiding $c:V\otimes V\to V\otimes V$ are given by
\begin{align*}
g\cdot x_i &= \chi_i(g)x_i, & \delta(x_i) &= h_i\otimes x_i, & c(x_i\otimes x_j)&=\chi_j(h_i) \, x_j \otimes x_i, &g\in G, &\, 1\le i, j \le \theta,
\end{align*}
Thus, the matrix $\bq=(q_{ij})\in(\Bbbk^{\times})^{\theta\times \theta}$, where $q_{ij}=\chi_j(h_i)$, fully determines the braiding, and thence the corresponding Nichols algebra, denoted by $\cB_{\bq}\coloneqq\cB(V)$. A braided vector space given by a matrix $\bq$ as above is called \emph{of diagonal type}, and a Nichols algebra is of diagonal type if its associated braiding is so.

\begin{example}\label{exa:Nichols-rank-one}
When $\dim V=1$, i.e. $V=\Bbbk_g^{\chi}$ for some $g\in\Gamma$ and $\chi\in\widehat{G}$, the Nichols algebra $\cB_{\bq}$ is determined by $q=\chi(g)$ as follows:
\begin{align*}
	\cB_{\bq} \simeq \begin{cases} 
\Bbbk[x] & \text{if }q \text{ is not a root of unity or }q=1,
\\
\Bbbk[x]/(x^N) & \text{if }q \text{ is a primitive root of unity of order }N\ge 2.
\end{cases}
\end{align*}
\end{example}

The previous example corresponds to the positive part of the quantized enveloping algebra $U_q(\mathfrak{sl}_2)$ when $q$ is not a root of unity, and the positive part of the small quantum group $\mathfrak{u}_q(\mathfrak{sl}_2)$ when $q$ is a root of unity. More general, positive parts of quantized enveloping algebras $U_q(\mathfrak{g})$ when $q$ is not a root of unity, and of the small quantum groups $\mathfrak{u}_q(\mathfrak{g})$ associated to semisimple Lie algebras $\mathfrak{g}$ are examples of Nichols algebras of diagonal type.

\smallbreak

A usual procedure to deal with braided vector spaces of diagonal type is to realize the braiding in a \emph{universal framework}, over the group $\Z^{\theta}$: although this group is infinite, the realization provides nice properties about $\cB_{\bq}$. We usually take $h_i=\alpha_i$, and $\chi_j$ as the unique character on $\Z^{\theta}$ such that $\chi_j(\alpha_i)=q_{ij}$ for all $i$. One of these nice properties is that $\cB_{\bq}$ is $\N_0^{\theta}$-graded, with each $x_i$ in degree $\alpha_i$.

Assume that the root system $\varDelta^{\bq}$ is finite: $\varDelta^{\bq}_+=\{\beta_k | 1\le k\le M \}$ as above. Each $V_{\beta}$ is a one-dimensional vector space spanned by a vector $x_{\beta}\in\cB_{\bq}$ of degree $\beta=\sum_{i=1}^{\theta} b_i\alpha_i\in\Z^{\theta}$, and the action of $\Z^{\theta}$ on $x_{\beta}$ is given by $\chi_{\beta}\coloneqq\chi_1^{b_1}\cdots \chi_{\theta}^{b_{\theta}}$.  Given $\alpha,\beta\in\Z^{\theta}$, set $q_{\alpha\beta}\coloneqq \chi_{\beta}(\alpha)$. By Theorem \ref{thm:Nichols-decomposed-PBW}, $\cB_{\bq}$ has a (restricted) PBW basis
\begin{align*}
& x_{\beta_1}^{a_1}\cdots x_{\beta_M}^{a_M}, &
& 0\le a_i<N_{\beta_i}, &
\text{where }& N_{\beta} \coloneqq \operatorname{ord} q_{\beta\beta}.
\end{align*}
Each PBW generator $x_{\beta_k}$ is a non-zero element of $V_{\beta_k}$ and can be constructed using Lusztig isomorphisms of Drinfeld doubles of Nichols algebras, see \cite{Heck-Drinfeld-doubles,HY-RUMA}. Originally, there was a combinatorial way to construct PBW bases of Nichols algebras (or more generally, Hopf quotient of tensor algebras) of diagonal type by means of Lyndon words, see \cite{Kharchenko-book}, and the root system of $\cB_{\bq}$ was defined starting on the degrees of PBW generators \cite{HY-Coxetergpd}.

Coming back to the Weyl groupoids, the Cartan matrix $C^{\bq}$ in \eqref{eq:weyl-gpd-aij-defn} becomes
\begin{align*}
c_{ij}^{\bq} &= -\min \{ n\in\N_0 | (n+1)_{q_{ii}}(1-q_{ii}^nq_{ij}q_{ji})=0 \}, &  &i\ne j.
\end{align*}
Thus, the definition of $C^{\bq}$, and a fortiori the reflections $\rho_i$, depends really on $\widetilde{q}_{ij}\coloneqq q_{ij}q_{ji}$ more than in the individual values of $q_{ij}$ and $q_{ji}$. In this direction, we introduce the Dynkin diagram of $\bq$ as the graph with $\theta$ vertices labelled with $q_{ii}$, and edges between $i\ne j$ when $\widetilde{q}_{ij}\ne 1$, labelled with this scalar. Two braiding matrices $\bq$ and $\bq'$ are called \emph{twist equivalent} if they have the same Dynkin diagram. 

Notice that we have an edge between $i\ne j$ if and only if $c_{ij}^{\bq}\ne 0$. A braiding matrix $\bq$ is called connected if and only if its Dynlin diagram is so. According with the discussion around \eqref{eq:Nichols-decomposition}, it is enough to determine when a Nichols algebra has finite root system for a braided indecomposable $V$, which means that the Dynkin diagram is connected.  Using the combinatorics of Weyl groupoids, Heckenberger was able to do so.

\begin{theorem}\label{thm:diagonal-classif}
\cite{Heck-classif-diagonal}
Assume that the Dynkin diagram of $\bq$ is connected.
A Nichols algebra $\cB_{\bq}$ has finite root system if and only if $\bq$ belongs to one of the following families:
\begin{enumerate}[leftmargin=*,label=\rm{(\Roman*)}]
	\item\label{item:diagonal-classif-Cartan} \emph{Cartan type} with parameter $q$, $q\ne 0,1$.
	\item\label{item:diagonal-classif-super} \emph{super type} with parameter $q$, $q\ne 0,\pm 1$.
	\item\label{item:diagonal-classif-super-modular} one of families of \emph{(super) modular type}.
	\item\label{item:diagonal-classif-ufo} one of the twelve families of \emph{unidentified type}.
\end{enumerate}
In particular, $\dim \cB_{\bq}<\infty$ if and only if $\bq$ belongs to \ref{item:diagonal-classif-Cartan} with $q$ a root of unity of order $N\ge 2$,   \ref{item:diagonal-classif-super} with $q$ a root of unity of order $N>2$, \ref{item:diagonal-classif-super-modular} or \ref{item:diagonal-classif-ufo}.
\end{theorem}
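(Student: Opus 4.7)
My plan is to proceed by reducing to rank two and then propagating along the Weyl groupoid. First, observe that for a braiding of diagonal type the reflections $\rho_i$ are always defined when $V$ is $i$-finite (every simple $V_j$ is one-dimensional), so if the root system $\varDelta^\bq$ is finite then $V$ automatically admits all reflections and the associated Cartan graph $\cG(\bq)$ is finite. Conversely, if $\cG(\bq)$ is finite, Theorem \ref{thm:Nichols-decomposed-PBW} gives a PBW basis indexed by $\varDelta^\bq_+$, so $\dim\cB_\bq<\infty$ iff each $q_{\beta\beta}=\chi_\beta(\beta)$ is a root of unity. Hence the task splits: (a) classify all Dynkin diagrams of connected $\bq$ for which $\varDelta^\bq$ is finite, and (b) for each such diagram read off when every PBW generator has finite height.

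For step (a) the heart of the matter is the rank two case. Fix $\theta=2$ and write $q_{11}=q$, $q_{22}=r$, $\widetilde q_{12}=q_{12}q_{21}$. The Cartan entries
\[
c_{12}^\bq=-\min\{n\in\N_0\mid (n+1)_{q}(1-q^n\widetilde q_{12})=0\},\qquad c_{21}^\bq=-\min\{n\in\N_0\mid (n+1)_{r}(1-r^n\widetilde q_{12})=0\},
\]
together with the twist invariance observed after \eqref{eq:weyl-gpd-aij-defn}, force a finite list of admissible triples $(q,r,\widetilde q_{12})$. One then applies $\rho_1$ and $\rho_2$ and demands that the reflected diagram again be admissible, iterating until the orbit closes. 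This finite exhaustion over the rank two Weyl groupoid yields the complete rank two list: Cartan, super, (super) modular, and unidentified types in rank two. The bookkeeping is long but finite because the orders of $q,r,\widetilde q_{12}$ are bounded by small arithmetic constraints coming from the closure condition.

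For step (a) in higher rank I would bootstrap from rank two. Every rank two subdiagram of a connected $\bq$ of rank $\theta$ must itself appear in the rank two list, and since $\bq$ admits all reflections, the Weyl groupoid acts transitively on an orbit of Dynkin diagrams, all of whose rank two subdiagrams are in the rank two list. The finiteness result for Weyl groupoids of Cuntz--Heckenberger \cite{CH-fin-Weylgpd}, combined with the compatibility of reflections on diagonal braidings, then produces the finite list of families \ref{item:diagonal-classif-Cartan}--\ref{item:diagonal-classif-ufo}. Conversely, for each diagram in the list one must verify that the orbit closes and the resulting root system is indeed finite; this is a direct check Dynkin diagram by Dynkin diagram.

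Finally, step (b) is the easy consequence. Given the PBW factorization $\cB_\bq\simeq\bigotimes_{k=1}^M \Bbbk\langle V_{\beta_k}\rangle$ from Theorem \ref{thm:Nichols-decomposed-PBW}, and the fact that each $\Bbbk\langle V_{\beta_k}\rangle$ is the rank one Nichols algebra described in Example \ref{exa:Nichols-rank-one}, finite-dimensionality is equivalent to every $q_{\beta\beta}$ being a root of unity of order $\ge 2$. Inspecting each family: in Cartan type the $q_{\beta\beta}$ are powers of $q$, so one needs $q$ a root of unity of order $N\ge 2$; in super type $q=\pm 1$ makes some PBW generator have infinite height because the even simple roots contribute factors of $q^{\pm 2}=1$, so one needs $N>2$; and in (super) modular and unidentified types the parameter is already constrained to a specific root of unity, so finite-dimensionality is automatic. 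The main obstacle, by far, is the combinatorial rank two analysis and the subsequent enumeration of admissible orbits in higher rank: there is no conceptual shortcut and the twelve unidentified families appear precisely because they cannot be organised by any a priori Lie-theoretic pattern.
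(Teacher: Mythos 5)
The paper offers no proof of this theorem: it is quoted from \cite{Heck-classif-diagonal}, so your sketch can only be measured against Heckenberger's actual argument. At the level of architecture — rank two first, propagation along the Weyl groupoid, then the finite-dimensionality criterion read off the PBW basis of Theorem \ref{thm:Nichols-decomposed-PBW} — your outline matches the real proof, and your step (b) is essentially correct.

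The genuine gap is in your justification of the rank two classification. You claim the enumeration of admissible triples $(q,r,\widetilde q_{12})$ terminates ``because the orders of $q,r,\widetilde q_{12}$ are bounded by small arithmetic constraints.'' This is false: families \ref{item:diagonal-classif-Cartan} and \ref{item:diagonal-classif-super} already contain, in rank two, a free parameter $q$ of arbitrary (possibly infinite) order — Cartan type $A_2$ with any $q\ne 0,1$ has finite root system. So the output of the rank two analysis is a finite list of one-parameter \emph{families} together with sporadic points, not a finite set of triples, and proving that this list is exhaustive is precisely the hard combinatorial content of Heckenberger's work; it does not follow from a bounded exhaustive search, and as written your procedure would not terminate. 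A second, smaller gap is the higher-rank step: admissibility of all rank two subdiagrams is necessary but far from sufficient (most diagrams glued from admissible rank two pieces have infinite Weyl groupoid), and invoking \cite{CH-fin-Weylgpd} only constrains the possible Cartan graphs — one must still determine which of these are realized by a diagonal braiding, which requires its own induction (historically, rank three was settled separately before general rank). So your proposal correctly identifies where the difficulty lies but does not supply the mechanism that actually makes either the rank two list or the higher-rank enumeration complete.
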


Braidings of Cartan type were introduced by Andruskiewitsch and Schneider, see \cite{AnSch-liftings} for the precise definition. Each braiding matrix $\bq$ of Cartan type in Heckenberger's list is twist equivalent to the braiding matrix of the positive part of a quantum group with finite Cartan matrix.\footnote{Here, the parameter for Heckenberger's list differs from the one in quantum groups via a reparametrization $q\leftrightarrow q^2$.} Analogously, those of super type are twist equivalent to the braiding matrices of the positive part of a quantized enveloping superalgebra $U_q(\mathfrak{g})$, where $\mathfrak{g}$ is a finite dimensional contragredient Lie superalgebra. Each family in \ref{item:diagonal-classif-super-modular}  and \ref{item:diagonal-classif-ufo} corresponds to a Weyl groupoid class, i.e. all the braiding matrices obtained by applying repeatedly reflections.
Those $\bq$ in \ref{item:diagonal-classif-super-modular} have the same Weyl groupoid as a contragredient Lie (super) algebra over a field of characteristic $p>0$ and involves a root of unity of order $p$ describing the braiding matrix, while Weyl groupoids  in \ref{item:diagonal-classif-ufo} are in principle not related to any object in Lie Theory. See \cite{AA-survey-diagonal} for more details. About the presentation of the defining ideal of each Nichols algebra $\cB_{\bq}$:

\begin{theorem}\label{thm:Nichols-diag-defn-rels}
\cite{Ang-Crelle}
Let $\cB_{\bq}$ be a Nichols algebra of diagonal type and finite root system with braiding matrix $\bq$ of size $\theta$. Then $\cB_{\bq}$ is presented by generators $x_i$, $1\le i\le\theta$ and relations:
\begin{enumerate}[leftmargin=*,label=\rm{(\roman*)}]
	\item\label{item:diagonal-rels-PRV} $x_{\beta}^{N_{\beta}}$, for all $\beta\in\cO^{\bq}_+$ such that $N_{\beta}<\infty$;
	\item\label{item:diagonal-rels-qSerre} $(\ad_c x_i)^{1-c_{ij}^{\bq}}x_j$ for all $i\ne j$ such that $q_{ii}^{c_{ij}^{\bq}}=q_{ij}q_{ji}$, $1-c_{ij}<\operatorname{ord} N_{i}$;
	\item\label{item:diagonal-rels-simple-nonCartan} $x_i^{N_i}$ for each $i$ such that $\alpha_i\notin \cO^{\bq}_+$;
	\item\label{item:diagonal-rels-others} A list of relations on 2, 3 and 4 letters, depending on the Dynkin subdiagram of these letters.
\end{enumerate}

\end{theorem}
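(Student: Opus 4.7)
The plan is to verify that every proposed relation vanishes in $\cB_{\bq}$, and then to show that the quotient of $T(V)$ by the ideal they generate has the correct dimension predicted by Theorem \ref{thm:Nichols-decomposed-PBW}. First, I would check that each listed element lies in the defining ideal $\cI(V)$. For the quantum Serre relations in \ref{item:diagonal-rels-qSerre}, this is a direct skew-derivation calculation using the defining property $q_{ii}^{c_{ij}^{\bq}} = q_{ij}q_{ji}$ together with the identity $(\ad_c x_i)^{n} x_j$ being primitive modulo lower degree. For \ref{item:diagonal-rels-simple-nonCartan}, the element $x_i^{N_i}$ is primitive modulo lower-degree terms and vanishes because $\operatorname{ord} q_{ii} = N_i$, by Example \ref{exa:Nichols-rank-one}. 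For the root-vector powers in \ref{item:diagonal-rels-PRV}, Theorem \ref{thm:Nichols-decomposed-PBW} identifies the subalgebra $\Bbbk\langle V_{\beta}\rangle$ with the Nichols algebra of the one-dimensional module $V_\beta$, so it is a truncated polynomial ring of degree $N_\beta$ by Example \ref{exa:Nichols-rank-one}. The additional relations \ref{item:diagonal-rels-others} are checked family by family against the classification of Theorem \ref{thm:diagonal-classif}.

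Next, let $\widehat{\cB}_{\bq} := T(V)/\cJ$, where $\cJ$ is the ideal generated by the elements in \ref{item:diagonal-rels-PRV}--\ref{item:diagonal-rels-others}. The previous step also shows that each generator of $\cJ$ is primitive modulo already-listed relations, so $\cJ$ is a Hopf ideal and there is a canonical surjection of graded Hopf algebras in $\yd{\Bbbk\Z^{\theta}}$, namely $\pi:\widehat{\cB}_{\bq}\twoheadrightarrow \cB_{\bq}$. The heart of the proof is to show $\pi$ is an isomorphism, which I would do by induction on the length $M$ of $\omega_0^{\bq}$, descending via a reflection $\rho_i$. The key technical claim is that Heckenberger's Lusztig isomorphisms between Drinfeld doubles of $\cB_{\bq}$ and $\cB_{\rho_i(\bq)}$ lift to isomorphisms between the Drinfeld doubles of the pre-Nichols algebras $\widehat{\cB}_{\bq}$ and $\widehat{\cB}_{\rho_i(\bq)}$. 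This lift requires checking that the relations of types \ref{item:diagonal-rels-qSerre}, \ref{item:diagonal-rels-simple-nonCartan} and \ref{item:diagonal-rels-others} are transported to the corresponding relations on the other side, while the powers in \ref{item:diagonal-rels-PRV} are permuted (up to nonzero scalar) by the action of $s_i^{\bq}$ on positive roots. Once established, iteration produces a PBW-type filtration on $\widehat{\cB}_{\bq}$ whose associated graded is a quotient of $\bigotimes_{k=1}^{M}\Bbbk[x_{\beta_k}]/(x_{\beta_k}^{N_{\beta_k}})$, forcing $\dim\widehat{\cB}_{\bq}\le\dim\cB_{\bq}$ and hence that $\pi$ is an isomorphism.

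The main obstacle is the bookkeeping of \ref{item:diagonal-rels-others}: producing a complete list of two-, three- and four-letter relations demands a case-by-case analysis over each connected Dynkin subdiagram arising in Theorem \ref{thm:diagonal-classif}, and in every case one must verify both that the relation vanishes in $\cB_{\bq}$ and that it is preserved, or transformed into another listed relation, under every reflection $\rho_i$. This compatibility of the presentation with the Weyl groupoid action is the technical core of the argument: it is what allows the inductive step to close and what makes a uniform statement possible despite the combinatorial variety (Cartan, super, super-modular and the twelve unidentified families) of the underlying classification. A secondary, more routine difficulty is checking primitivity of the candidate relations modulo the ones of strictly smaller degree, in order to guarantee that $\cJ$ is indeed a Hopf ideal and that $\widehat{\cB}_{\bq}$ is a genuine pre-Nichols algebra on which the Lusztig-type machinery applies.
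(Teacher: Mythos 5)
The survey itself contains no proof of this theorem --- it is quoted verbatim from \cite{Ang-Crelle} --- so the only comparison available is with that reference, and your outline is essentially a faithful reconstruction of its strategy: verify each relation vanishes in $\cB_{\bq}$, form the quotient pre-Nichols algebra, and force the surjection onto $\cB_{\bq}$ to be an isomorphism by transporting the presentation along the Weyl groupoid via Lusztig-type isomorphisms and a PBW dimension count. The one implementation detail worth noting is that in the cited work the Cartan root-vector powers of type \ref{item:diagonal-rels-PRV} are \emph{not} included in the quotient at the stage where the Lusztig isomorphisms are lifted (they generate a skew-central Hopf subalgebra of the distinguished pre-Nichols algebra and are killed only at the end), precisely because $s_i^{\bq}$ sends $\alpha_i$ to $-\alpha_i$ and so does not merely permute the positive Cartan roots as your sketch assumes.
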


Here, a root $\beta\in\varDelta^{\bq}$ is \emph{Cartan} if $q_{\alpha\beta}q_{\beta\alpha}\in\{q_{\beta\beta}^n| n\in\N_0\}$ for all $\alpha\in\varDelta^{\bq}$, and $\cO^{\bq}_+$ denotes the set of positive Cartan roots. If $\bq$ is of Cartan type, e.g. close to positive parts of quantum groups, then all roots are Cartan and we need just relations \ref{item:diagonal-rels-PRV} and 
\ref{item:diagonal-rels-qSerre}. A particular case of \ref{item:diagonal-rels-simple-nonCartan} is when $q_{ii}=-1$ and $q_{ij}q_{ji}\ne \pm1$ for some $j\ne i$: this happens for example for degenerate odd roots of quantized enveloping algebras. Another fact that occurs in the super case is the need for relations different from Serre's quantum relations, in up to four letters, as shown in \cite{Y-super-quantum}.

The presentation has a deep consequence for pointed Hopf algebras, more specifically to answer \ref{item:lifting-method-gen-deg-one}:

\begin{theorem}\label{thm:gen-deg-one-diagonal}
\cite{Ang-Crelle}
Let $R$ be a finite-dimensional coradically graded Hopf algebra in $\yd{\Bbbk G}$, where $G$ is an abelian group. Then $R=\cB_{\bq}$, where $\bq$ is the braiding matrix of $R^1$. 

Therefore, if $H$ is a finite-dimensional pointed Hopf algebra whose radical is abelian, then $H$ is generated by group-like and skew primitive elements, or equivalently
$\gr H=\cB_{\bq}\# \Bbbk G(H)$. 
\end{theorem}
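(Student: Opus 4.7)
The plan is to show that $R$ equals the Nichols algebra $\cB_{\bq}$ sitting inside it. First, since $V \coloneqq R^1$ is a Yetter-Drinfeld module over the abelian group $G$, it decomposes as a direct sum of one-dimensional simples $\Bbbk_{h_i}^{\chi_i}$ and is therefore a braided vector space of diagonal type with matrix $\bq = (\chi_j(h_i))$. Because $R$ is coradically graded and connected, $\cP(R) = R^1 = V$, so the subalgebra $\Bbbk\langle V\rangle \subseteq R$ is a sub-Hopf algebra in $\yd{\Bbbk G}$ generated by its own primitives; by the universal characterization of Nichols algebras it must coincide with $\cB_{\bq}$. Since $R$ is finite-dimensional, so is $\cB_{\bq}$, and consequently $\bq$ belongs to Heckenberger's list (Theorem \ref{thm:diagonal-classif}); Theorem \ref{thm:Nichols-diag-defn-rels} then provides an explicit presentation of $\cB_{\bq}$ by generators and relations.

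The substance of the theorem is the reverse inclusion $R \subseteq \cB_{\bq}$. I would argue this by contradiction: assume $R \supsetneq \cB_{\bq}$ and let $d \ge 2$ be the minimal degree in which they disagree, so that $R^i = \cB_{\bq}^i$ for all $i < d$ and there exists $r \in R^d \setminus \cB_{\bq}^d$. Because $R$ is a graded Hopf algebra in $\yd{\Bbbk G}$, the braided coproduct decomposes as
\begin{align*}
\ude(r) \;=\; r \otimes 1 + 1 \otimes r + \sum r_{(1)} \otimes r_{(2)},
\end{align*}
where the mixed terms of bidegree $(i, d-i)$ with $0 < i < d$ all lie in $\cB_{\bq} \otimes \cB_{\bq}$ by minimality of $d$.

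The heart of the argument is then to exploit the presentation of Theorem \ref{thm:Nichols-diag-defn-rels}, together with the PBW basis of Theorem \ref{thm:Nichols-decomposed-PBW}, to show that after subtracting a suitable element of $\cB_{\bq}^d$ the representative $r$ can be taken to be primitive in $R$. Since $R$ is coradically graded, $\cP(R) = V$ lies entirely in degree one, so a primitive of degree $d \ge 2$ must vanish, yielding the desired contradiction. The key technical input is the property underlying Theorem \ref{thm:Nichols-diag-defn-rels} that each defining relation of $\cB_{\bq}$ is primitive modulo the relations of strictly smaller degree; proving this rests on the Kharchenko-Lyndon combinatorics of the PBW generators together with the Lusztig-type isomorphisms attached to the reflections $\rho_i$. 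Performing this reduction case-by-case across Heckenberger's families is the main technical obstacle, and it is exactly the investment one makes in establishing Theorem \ref{thm:Nichols-diag-defn-rels}.

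For the second assertion, let $H$ be a finite-dimensional pointed Hopf algebra with $G \coloneqq G(H)$ abelian. By Radford-Majid reconstruction, $\gr{H} \simeq R \# \Bbbk G$ with $R \in \yd{\Bbbk G}$ coradically graded, connected and finite-dimensional. The first part gives $R = \cB_{\bq}$, hence $\gr{H} = \cB_{\bq} \# \Bbbk G$ is generated as an algebra by $V$ and $G$. A linear section of $H_1 \twoheadrightarrow V$ lands in the space of skew-primitive elements of $H$, and together with $G$ these generate $H$ because $\gr{H}$ is generated in degrees $\le 1$, completing the proof.
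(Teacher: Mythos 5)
The paper itself gives no proof of this theorem; it is quoted from \cite{Ang-Crelle}. Measured against the argument actually carried out there, your proposal has a genuine gap at its central step. The first half is fine: $\cP(R)=R^1=V$ because $R$ is coradically graded and connected, so $\Bbbk\langle V\rangle\subseteq R$ is a connected graded Hopf subalgebra generated in degree one with all primitives in degree one, hence equals $\cB_{\bq}$. The problem is the reduction of an element $r\in R^d\setminus\cB_{\bq}^d$ to a primitive of $R$. Writing $\ude(r)-r\otimes 1-1\otimes r$ as an element of $\bigoplus_{0<i<d}\cB_{\bq}^i\otimes\cB_{\bq}^{d-i}$, what you need is that this "cocycle" lies in the image of $b\mapsto\ude(b)-b\otimes 1-1\otimes b$ for $b\in\cB_{\bq}^d$. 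That is a vanishing statement for a second cobar-type cohomology of $\cB_{\bq}$ in internal degree $d$, and it is essentially equivalent to the assertion that $\cB_{\bq}$ admits no proper coradically graded extensions --- i.e.\ to the theorem you are trying to prove. The technical input you invoke (each defining relation of $\cB_{\bq}$ is primitive modulo the relations of lower degree) is a statement about \emph{quotients} of $T(V)$, i.e.\ about pre-Nichols algebras sitting \emph{above} $\cB_{\bq}$ as quotients of the tensor algebra; it says nothing about graded algebras \emph{containing} $\cB_{\bq}$ as a subalgebra, and the degrees $d$ where $R$ could exceed $\cB_{\bq}$ need not even be degrees of defining relations.

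The proof in \cite{Ang-Crelle} avoids this by dualizing. Since $R$ is coradically graded, its graded dual $S=R^{*\mathrm{gr}}$ is a connected graded Hopf algebra in $\yd{\Bbbk G}$ \emph{generated in degree one} by $S^1=V^*$; hence $S=T(V^*)/J$ with $J\subseteq\cI(V^*)$, i.e.\ $S$ is a finite-dimensional pre-Nichols algebra of $V^*$, which is again of diagonal type. The theorem then follows from the main result of that paper: every finite-dimensional pre-Nichols algebra of diagonal type coincides with the Nichols algebra. There the "new primitive" appears for free --- if a defining relation of $\cB(V^*)$ fails to vanish in $S$, its image is a nonzero primitive of degree $\ge 2$ precisely because of the primitivity-modulo-lower-relations property you cite --- and the contradiction is \emph{not} that coradically-gradedness forbids higher primitives ($S$ is not coradically graded a priori), but that $\cB(\cP(S))\hookrightarrow S$ would force $\cB(V^*\oplus\Bbbk\bar r)$ to be finite-dimensional, which the classification of Theorem \ref{thm:diagonal-classif} rules out. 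So your instinct that a new primitive should produce the contradiction is right, but it must be located in the graded dual, where it genuinely exists; as written, your direct argument on $R$ assumes exactly what has to be proved. The deduction of the second assertion via Radford--Majid and lifting of degree-one generators is correct.
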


\subsubsection{Nichols algebras over non abelian groups.}
\label{subsubsec:classif-non-abelian}

Now assume that $G$ is a non-abelian group. As said in Example \ref{exa:YD-groups}, simple Yetter-Drinfeld modules correspond to pairs $(\cO,\rho)$, where $\cO$ is the conjugacy class of an element $g\in G$, and $\rho$ is an irreducible representation of the centralizer $G^g$ of $g$. 

One might think that the simplest case to determine when a Nichols algebra is finite-dimensional is when the Yetter Drinfeld modules module is simple, and it is the first to be tackled, thinking on a recursive argument on the number of simple summands. This is not what happens here. Determining when a Nichols algebra $\cB(M(\cO,\rho))$ is finite-dimensional is an open question, while the case where there are two or more summands, even still difficult, has been already solved: the role of the Weyl groupoid is fundamental in controlling the dimension of the Nichols algebra, as we will explain in the next paragraphs.

\smallbreak

As the structure of the Nichols algebra depends on the underlying braided vector space, as we already mentioned,  we can see braided vector spaces associated to \emph{racks} and cocycles \cite{AGr-Advances}. We recall that a rack is a pair $(X,\rhd)$, where $X$ is a set and $\rhd:X\times X\to X$ is a self distributive function such that $\varphi_x\coloneqq x\rhd-:X\to X$ is bijective for all $x\in X$. A $2$-cocycle for a rack $(X,\rhd)$ and a finite-dimensional vector space $W$ is a function $\bq:X\times X\to GL(W)$ such that
\begin{align*}
	\bq_{x,y\rhd z}\bq_{y,z} &= \bq_{x\rhd y,x\rhd z}\bq_{x,z} & \text{for all }&x,y,z\in X.
\end{align*}
Given a $2$-cocycle $\bq$, we get an structure of braided vector space on $V=\Bbbk X\otimes W$:
\begin{align*}
	c\left( (x\otimes w) \otimes (x'\otimes w')  \right) &= (x\rhd x'\otimes \bq_{x,x'}(w')) \otimes (x\otimes w), & \text{for all }&x,x'\in X, \, w,w'\in W
\end{align*}
Prominent examples of racks and $2$-cocycles come from a conjugacy class of a fixed group and a representation of the centralizer of a fixed element of this conjugacy class. In the case when the conjugacy class is finite, we obtain a finite-dimensional braided vector space, and this is the right context to deal with simple Yetter-Drinfeld modules over non abelian groups.

It is expected that most of the Nichols algebras over a non-abelian group are infinite-dimensional. In that sense, one says that a group $G$ \emph{collapses} if every Nichols algebra over $G$ is infinite dimensional. An effective way to determine that a group collapses is by looking at appropriate subracks of all conjugacy classes of the group. This was made for many simple groups, involving a detailed knowledge of the structure and in some case computations with GAP, see e.g. \cite{ACG-last,AFGV-sporadic} for groups of Lie type and sporadic groups, respectively.

A different and promising approach appeared recently in \cite{HMV-Nichols-prime-dim,AHV-2024}, where reduction to positive characteristic was explored to determine that most conjugacy classes collapse, for example for groups of odd order.

\smallbreak

Below we describe some examples of simple Yetter-Drinfeld modules that are relevant to what follows. We will use the notation in \cite{A-Nichols}.

\begin{example}\label{ex:simple-YD-dim2}
Let $g\in G$ be such that $g^G=\{g,g\kappa\}$ for some $\kappa \neq e \in G$. Then $G^{g}=G^{g\kappa}=G^{g^{-1}}$ is a subgroup of index two and $\kappa\in Z(G)$, $\kappa^2=e$. 
Fix $g_0\in G$ such that $g_0g=\kappa gg_0$, and $\chi$ a one-dimensional representation of $G^g$ such that $\chi(g)=-1$. Then $M= M(g^G, \chi)$ is such that $\dim M=2$: we may fix a basis $\{x,y\}$ such that $y=g_0\cdot x$ and the coaction is given by $\rho(x)=g\otimes x$, $\rho(y)=g\kappa \otimes y$. The action satisfies
\begin{align*}
h \cdot x &= \begin{cases} \chi(h) x, & h\in G^g, \\
			\chi(g_0^{-1}h) y, & h\notin G^g;
		\end{cases}
		& h \cdot y &= \begin{cases}
			\chi(g_0^{-1}hg_0) y, & h\in G^g, \\
			\chi(hg_0) x, & h\notin G^g;
		\end{cases}
& &k\in G.
\end{align*}
Thus $M$ is of diagonal type with braiding matrix $\left[\begin{smallmatrix} -1 & -\chi(\kappa) \\ -\chi(\kappa) & -1 \end{smallmatrix}\right]$, so $\dim \cB(M)=4$: the defining relations are
\begin{align*}
x^2&=y^2=0, & xy+\chi(\kappa)yx &=0.
\end{align*}
\end{example}

\begin{example}\label{exa:FK}
Let $X=\cO^m_2$ be the conjugacy class of transpositions in $\mathbb{S}_m$, and $\bq$ the following cocycle:
\begin{align*}
\bq (\sigma, \tau) & = \begin{cases} 1, & \sigma(i)<\sigma(j), \\ -1 & \sigma(i)>\sigma(j). \end{cases}, & \text{where }\tau&=(i\, j), \, i<j.
\end{align*}
The braided vector space $V=\Bbbk X$ has basis $(x_{ij})_{1\le i<j\le m}$, and can be realized as Yetter-Drinfeld module over $\Bbbk\mathbb{S}_m$. We can check that the following quadratic relations hold in $\cB(V)$:
\begin{align}\label{eq:defn-rels-FKn-1}
&x_{ij}^2 =0, & &i<j,
\\\label{eq:defn-rels-FKn-2}
&x_{ij}x_{kl}-x_{kl}x_{ij} =0, & &\# \{i,j,k,l\}=4,
\\\label{eq:defn-rels-FKn-3}
&
\begin{aligned}
&x_{jk}x_{ik}-x_{ij}x_{jk}+x_{ik}x_{ij}=0, \\
&x_{ik}x_{jk}-x_{jk}x_{ij}+x_{ij}x_{ik}=0,
\end{aligned} & &i<j<k.
\end{align}
Let $\mathtt{FK}_m$ be the quotient of $T(V)$ by these relations: it is called the $m$-th Fomin Kirillov algebra as it appeared first in \cite{FK}, and projects onto $\cB(V)$. Moreover,
\begin{itemize}[leftmargin=*]
	\item If $3\le m\le 5$, then $\mathtt{FK}_m=\cB(V)$, and has dimension 12, 576, 8294400, respectively.
	\item If $m\ge 6$, then it is not know when $\mathtt{FK}_m=\cB(V)$ nor if $\mathtt{FK}_m$ is finite-dimensional.
\end{itemize}
\end{example}

\smallbreak

Next, we consider the case where the Yetter-Drinfeld module is not simple. The role played by the Weyl groupoid is crucial, since the symmetries of this groupoid impose very strong restrictions on the groups in which we can realize Yetter-Drinfeld modules with a finite root system. Indeed,  if a Weyl groupoid $\cW_V$ is finite (e.g. when $\dim \cB(V)<\infty$), then there exists an object $W$ such that the Cartan matrix $C^W$ is finite, see \cite{HV-ranktwo}. As a consequence of this fact:

\begin{theorem}\label{thm:rank2-non-ab-groups-presentations}
\cite{HV-ranktwo} Let $V_1$ and $V_2$ be two absolutely simple Yetter–Drinfeld modules over a non-abelian group $G$. Assume that $G$ is generated by the support of 
$V\coloneqq V_1\oplus V_2$,  $\dim\cB(V)<\infty$ and $c_{V_1,V_2}c_{V_2,V_1}\ne \id{V_2\otimes V_1}$. Then $G$ is a quotient of one of the following groups:
\begin{itemize}[leftmargin=*]
\item $T$, the group given by generators $\zeta$, $\sigma_1$, $\sigma_2$ and relations
\begin{align*}
\sigma_1\sigma_2\sigma_1&=\sigma_2\sigma_1\sigma_2, & \sigma_1^3 &=\sigma_2^3, & \zeta\sigma_i &=\sigma_i\zeta, \quad i=1,2;
\end{align*}
	\item $\Gamma_n$, $n\in\{2,3,4\}$, the group given by generators $a$, $b$, $\nu$ and relations
\begin{align*}
ba &= \nu ab, & \nu a &= a\nu^{-1}, & \nu b &= b\nu, & \nu^n&=1.
\end{align*}
\end{itemize}
\end{theorem}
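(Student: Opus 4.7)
My plan is to exploit the finiteness of the Weyl groupoid to bring $V$ to a ``standard'' form with finite Cartan matrix, and then perform a case-by-case analysis on the rank-$2$ finite Cartan types.

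First, the assumption $\dim\cB(V)<\infty$ implies that the Weyl groupoid $\cW_V$ is finite. By the existence result invoked just before the statement (from the same paper \cite{HV-ranktwo}), there is some $W=W_1\oplus W_2$ in the Weyl equivalence class of $V$ such that the Cartan matrix $C^W$ is of finite type. I would then verify that passing from $V$ to $W$ preserves the two hypotheses I need: reflections $\rho_i$ replace simple components $W_j$ by other absolutely simple components supported on conjugacy classes contained in the subgroup generated by the original supports, so the subgroup generated by $\operatorname{supp}(W)$ coincides with $G$; and the non-triviality of the braiding between the two summands, $c_{W_1,W_2}c_{W_2,W_1}\ne\id{W_2\otimes W_1}$, is preserved because this is equivalent to $C^W$ being connected. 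Together with the classification of finite indecomposable rank-$2$ Cartan matrices, this reduces the problem to the three cases $A_2$, $B_2$ and $G_2$.

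Next, for each Cartan type I would pick $g_1\in\operatorname{supp}(W_1)$, $g_2\in\operatorname{supp}(W_2)$ and read off group relations from the Nichols-algebra identities $(\ad W_i)^{1-c_{ij}^W}W_j=0$. Concretely, an iterated braided commutator lives in a specific $\Z^{\theta}$-homogeneous component, and its $K$-coaction expresses its $G$-degree as a product of the $g_i$'s in a prescribed order; simplicity of the reflected module $\rho_i(W)_j\in\yd{\Bbbk G}$ (Remark \ref{rem:i-reflection-relation}) forces two such products to represent the same element of $G$. This is the mechanism that turns the Cartan data into a braid-type relation: in the $A_2$ case I expect to recover $g_1g_2g_1=g_2g_1g_2$, and similarly the longer ``braid words'' of length $4$ (resp.\ $6$) in the $B_2$ (resp.\ $G_2$) case. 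The additional relations (the cube relation $\sigma_1^3=\sigma_2^3$ and the centrality of $\zeta$ for $T$; the commutation laws $ba=\nu ab$, $\nu a=a\nu^{-1}$ for $\Gamma_n$ together with $\nu^n=1$) should fall out by examining the conjugation action of each generator on the centralizer of the other, together with constraints coming from the orders of the braid eigenvalues in the finite Nichols algebra.

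The main obstacle I anticipate is the detailed bookkeeping in this second step: absolute simplicity of the $W_i$ is delicate, because it mixes the conjugacy-class data with the irreducible representation $\rho_i$ of a centralizer, and one has to show that the group identities derived in $G$ are not only necessary but also sufficient, i.e.\ that no further relations are imposed. Once that is checked, the identification of $G$ with a quotient of $T$ (in the $A_2$ case) or of $\Gamma_n$ with $n\in\{2,3,4\}$ (according as the Cartan type is $B_2$ or $G_2$, with $n$ controlled by the order of a certain braiding scalar) will amount to observing that the presentation obtained for $G$ is exactly a set of consequences of the defining relations of $T$ or $\Gamma_n$, so $G$ is a quotient of one of these groups.
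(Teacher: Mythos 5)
The statement you are proving is quoted in the survey from \cite{HV-ranktwo}; the survey itself gives no proof beyond recording the one structural input you also use, namely that finiteness of the Weyl groupoid yields an object $W=W_1\oplus W_2$ in the Weyl-equivalence class of $V$ whose Cartan matrix is of finite type. Your first paragraph (reduction to finite Cartan type, preservation of support-generation and of $c_{W_1,W_2}c_{W_2,W_1}\ne\id{W_2\otimes W_1}$ under reflections) matches that input and is essentially sound, although the preservation of support-generation under $\rho_i$ is itself a lemma that must be proved rather than asserted.

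The gap is in the second step, which is where the entire content of the theorem lies. The mechanism you propose --- reading the relations of $T$ and $\Gamma_n$ off the $G$-degrees of iterated braided commutators, using simplicity of $\rho_i(W)_j$ --- does not produce the stated presentations. The homogeneous components of $(\ad W_1)^m W_2$ have degrees of the form $g_1\cdots g_m h$ with all $g_k\in\operatorname{supp}W_1$ and $h\in\operatorname{supp}W_2$, so no alternating word such as $\sigma_1\sigma_2\sigma_1=\sigma_2\sigma_1\sigma_2$ can arise this way; and simplicity of $\rho_i(W)_j$ only forces its support to be a single conjugacy class, i.e.\ it identifies such products up to conjugacy in $G$, not as equal elements. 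Moreover the clean dictionary you posit ($A_2\mapsto T$; $B_2,G_2\mapsto\Gamma_n$) is not what actually happens: in the classification the examples over $\Gamma_2$ have standard Weyl groupoid of type $A_2$, those over $\Gamma_3$ of type $B_2$, while the examples over $\Gamma_4$ and over $T$ have non-standard Weyl groupoids (different Cartan matrices at different objects of $\cX$), and type $G_2$ has to be excluded rather than matched to a group. The argument in \cite{HV-ranktwo} instead rests on a sequence of technical results from the authors' earlier rank-two papers that characterize exactly when $(\ad V_1)^2V_2=0$ and $(\ad V_1)^3V_2=0$ in terms of the conjugation action between the two supports and the associated cocycles, bound the sizes of the supports, and only then assemble the presentations of the candidate groups. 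None of that is supplied or replaced by your sketch, so as it stands the proposal does not establish that $G$ is a quotient of one of the four listed groups.
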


This result allowed to classify in the same paper all finite-dimensional Nichols algebras over Yetter-Drinfeld modules $V\coloneqq V_1\oplus V_2$, where $V_1$ and $V_2$ are two absolutely simple Yetter–Drinfeld modules and $G$ is a non-abelian group generated by the support. Improving the techniques provided there, together with the finiteness of one of the matrices in the Weyl groupoid, Heckenberger and Vendramin were able to extend the result to decompositions with more summands. We will introduce some notation from \cite{HV-classif-non-ab} needed to describe such classification. 

Let $\cE_{\theta}^G$ be the family of Yetter-Drinfeld modules $V\in\yd{\Bbbk G}$ decomposed as $V=\oplus_{j=1}^{\theta} V_j$, where the support of $V$ generates $G$, each $V_j$ is simple and $V$ is \emph{braided-indecomposable}: for each $i\in\{1,\cdots,\theta\}$ there exists $j\ne i$ such that $c_{V_i,V_j}c_{V_j,V_i}\ne \id{V_j\otimes V_i}$. 

In \cite{HV-classif-non-ab} the authors introduce a kind of Dynkin diagrams of the form 
\begin{align*}
\xymatrix{\square \ar@{.}[r] & \square \ar@{.}[r] & \square \ar@{.}[r] & \cdots },
\end{align*}
where
\begin{itemize}
\item There are $\theta$ vertices $\square$; each vertex $\square$ is either a point $\overset{-1}{\bullet}$ labelled with $-1$, two dots $\colon$ or three points $\therefore\,$.
\item A vertex $i$ of the shape $\overset{-1}{\bullet}$ means that $\dim V_i=1$ and $c_{V_i,V_i}=-\id{V_i\otimes V_i}$.
\item A vertex $i$ of the shape $\colon$ means that $\dim V_i=2$ and $c_{V_i,V_i}$ is as in Example \ref{ex:simple-YD-dim2}.
\item A vertex $i$ of the shape $\therefore$ means that $\dim V_i=3$ and $c_{V_i,V_i}$ is as in Example \ref{exa:FK}.
\item If $i\ne j$ are such that $c_{V_i,V_j}c_{V_j,V_i}=\id{V_j\otimes V_i}$, then there exists no edge between $i$ and $j$.
\item On the other hand, if $c_{V_i,V_j}c_{V_j,V_i}\ne \id{V_j\otimes V_i}$, then there exists an edge between vertices $i$ and $j$, which is either a labeled line $\xymatrix{\ar@{-}[r]^{q} & }$ between two dots, where the label $q\in\Bbbk^{\times}-\{1\}$ refers to $c_{V_i,V_j}c_{V_j,V_i}$, or a double labeled arrow $\xymatrix{\ar@{=>}[r]^{q} & }$ between a point and a vertex which is not a point, or a double arrow $\xymatrix{\ar@{==>}[r] & }$ between two vertices which are both not a point.
\end{itemize}
About those Yetter-Drinfled modules in the classification for $\theta=2$, we include the precise reference to \cite{HV-ranktwo} where one can find the detailed description of the structure of $V= V_1\oplus V_2$.

\begin{theorem}
Let $V \in\cE_{\theta}^G$ be such that the support of $V$ generates $G$ and $V$ is braided-indecomposable. Then $\dim \cB(V)<\infty$ if and only if $\theta=2$ and $V$ is one of the following: 
\begin{itemize}[leftmargin=*]\renewcommand{\labelitemi}{$\circ$}
\item \cite[Example 1.5]{HV-ranktwo}, realizable over the group $\Gamma_4$, and $\dim \cB(V)=65536$;

\item \cite[Example 1.7]{HV-ranktwo}, realizable over the group $T$, and $\dim \cB(V)= 80621568$;

\item \cite[Example 1.9]{HV-ranktwo}, realizable over the group $\Gamma_3$, and $\dim\cB(V)$ is either $10368$ or $2304$, depending on the value of a representation involved in the definition;
\item \cite[Example 1.10]{HV-ranktwo}, realizable over the group $\Gamma_3$, and $\dim \cB(V)=10368$;
\item \cite[Example 1.11]{HV-ranktwo}, realizable over the group $\Gamma_3$, and $\dim \cB(V)=2304$;
\end{itemize}	
or else $V$ has one of the following diagrams from \cite{HV-classif-non-ab}:
\begin{itemize}[leftmargin=*]\renewcommand{\labelitemi}{$\circ$}
\item $\alpha_{\theta}: \quad \xymatrix{\colon \ar@{--}[r] & \colon \ar@{--}[r] & \quad \mathbf{\cdots}\quad  \ar@{--}[r] & \colon\ar@{--}[r] & \colon}$, with $\theta\ge 2$;

\item $\beta'_3: \quad \xymatrix{\overset{\xi}{\bullet} \ar@{-}[r]^{\xi^{-1}} & \overset{\xi}{\bullet} \ar@{=>}[r]^{\xi^{-1}} & \overset{-1}{\therefore} }$, where $1+\xi+\xi^2=0$;

\item $\beta''_3: \quad \xymatrix{\overset{\xi}{\bullet} \ar@{=>}[r]^{\xi-1} & \overset{-\xi}{\colon} \ar@{==>}[r] & \overset{-1}{\therefore} }$, where $1+\xi+\xi^2=0$;

\item $\gamma_{\theta}: \quad \xymatrix{\colon \ar@{--}[r] & \colon \ar@{--}[r] & \quad \mathbf{\cdots}\quad  \ar@{--}[r] & \colon\ar@{<=}[r]^{-1} & \overset{-1}{\bullet}}$, with $\theta\ge 3$;

\item $\delta_{\theta}: \quad \xymatrix{ & & & \colon\ar@{--}[d] & \\ \colon \ar@{--}[r] & \colon \ar@{--}[r] & \quad \mathbf{\cdots}\quad  \ar@{--}[r] & \colon\ar@{--}[r] & \colon}$, with $\theta\ge 4$;

\item $\varepsilon_{\theta}: \quad \xymatrix{ & & \colon\ar@{--}[d] & & \\ \colon \ar@{--}[r] & \colon \quad \mathbf{\cdots} &  \colon \ar@{--}[r] & \colon\ar@{--}[r] & \colon}$, with $\theta\in\{6,7,8\}$;

\item $\varphi_4: \quad \xymatrix{\overset{-1}{\bullet} \ar@{-}[r]^{-1} & \overset{-1}{\bullet} \ar@{=>}[r]^{-1} & \colon\ar@{--}[r] & \colon}$.
\end{itemize}
\end{theorem}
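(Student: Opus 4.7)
The plan is to leverage the Weyl groupoid machinery together with the rank-two classification already recalled as Theorem \ref{thm:rank2-non-ab-groups-presentations}, reducing the statement to a purely combinatorial problem on admissible Dynkin-type diagrams. The starting point is the observation that if $\dim \cB(V)<\infty$ then $V$ admits all reflections and the attached Weyl groupoid $\cW(V)$ is finite, so in particular, by the result of Heckenberger--Vendramin cited after Example \ref{exa:FK}, there exists an object $W\in\cX$ in the Weyl-equivalence class of $V$ whose Cartan matrix $C^W$ is of finite type. Since braided-indecomposability is preserved under reflections and forces $C^W$ to be connected, we may therefore assume that $V$ itself has a connected, finite-type Cartan matrix.

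The second step is a local analysis at every pair of indices $i\ne j$. For each such pair the Yetter--Drinfeld submodule $V_i\oplus V_j$ is itself $i$- and $j$-finite, and since by braided-indecomposability there exists at least one partner $j$ with $c_{V_i,V_j}c_{V_j,V_i}\ne \id{V_j\otimes V_i}$, we may apply Theorem \ref{thm:rank2-non-ab-groups-presentations}. This constrains the subgroup generated by the supports of $V_i$ and $V_j$ to be a quotient of $T$ or $\Gamma_n$ with $n\in\{2,3,4\}$, and simultaneously forces $\dim V_i\in\{1,2,3\}$ with $c_{V_i,V_i}$ of one of the three possible shapes (recorded as the vertex symbols $\overset{-1}{\bullet}$, $\colon$, $\therefore$ in the excerpt). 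It also forces the ``edge'' $V_i$--$V_j$ to be one of the labelled shapes appearing in the diagrams of the theorem; in particular a vertex of type $\therefore$ (Fomin--Kirillov) can only occur with very restrictive neighbours, and any edge carries the combinatorial constraints coming from the rank-two Nichols algebra classification.

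The third step is the global gluing. Assuming connectedness, one builds the possible diagrams by starting at a single vertex and extending, at each stage checking that every pair of adjacent vertices fits the local constraints from step two \emph{and} that all $3$-vertex subdiagrams admit a finite Weyl groupoid. Finiteness of the Weyl groupoid of every rank-three subdiagram is the crucial global obstruction, since it rules out many locally legal configurations (for instance, it is this step that pins down the valid shapes $\alpha_\theta,\beta_3',\beta_3'',\gamma_\theta,\delta_\theta,\varepsilon_\theta,\varphi_4$ and forbids branching at a $\therefore$ vertex, multiple $\therefore$ vertices, etc.). A careful case analysis on the maximal degree of a vertex and on the position of any non-dot vertex, running through all rank-three and rank-four subdiagrams case by case with help from the rank-three classification in \cite{HV-classif-non-ab}, produces exactly the listed diagrams and the five exceptional rank-two modules.

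Finally, to conclude the ``if'' direction one must exhibit, for each diagram in the list, an explicit realization over the claimed group and compute $\dim \cB(V)$. This is done by realizing $V$ as a Yetter--Drinfeld module over the appropriate finite quotient of $T$ or $\Gamma_n$ (or an epimorphic image thereof in the exceptional rank-two cases) and then applying Theorem \ref{thm:Nichols-decomposed-PBW}, which yields the product of the dimensions $\dim \Bbbk\langle V_{\beta_k}\rangle$ over positive roots $\beta_k\in\varDelta^V_+$. I expect the main obstacle to lie in step three: the compatibility of rank-two data at \emph{every} pair of adjacent vertices, together with the finiteness constraint on all rank-three sub-Weyl-groupoids, forces a lengthy case analysis, and ruling out ``almost admissible'' diagrams (those that satisfy every rank-two check but whose rank-three groupoid is infinite) is the delicate part.
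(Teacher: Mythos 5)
This theorem is quoted in the survey without proof --- it is the main result of \cite{HV-ranktwo} and \cite{HV-classif-non-ab} --- and your outline faithfully reconstructs the strategy of those papers: finiteness of the Weyl groupoid, pairwise reduction to the rank-two classification of Theorem \ref{thm:rank2-non-ab-groups-presentations}, a combinatorial gluing argument controlled by finiteness of the sub-Weyl-groupoids, and the PBW decomposition of Theorem \ref{thm:Nichols-decomposed-PBW} for the dimension counts. Since essentially all of the content lies in the exhaustive case analysis you defer to at step three, this is a correct roadmap of the same approach rather than a self-contained proof.
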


\begin{remark}
\begin{enumerate}[leftmargin=*]
	\item There are no missing examples from \cite{HV-ranktwo}. In fact, Example 1.2 corresponds to type $\alpha_2$ (and is realizable over $\Gamma_2$), while Examples 1.3 only exists over a field of characteristic 3.
	The same happens with type $\beta_{\theta}$ in \cite{HV-classif-non-ab}, as only exists over fields of characteristic 3.
	\item When $\theta=2$, examples of the same dimension realizable over $\Gamma_3$ are Weyl equivalent and the Weyl groupoid is standard of type $B_2$, while those examples over $\Gamma_4$ and $T$ are just equivalent to themselves.
	\item Yetter-Drinfeld modules with diagrams $\beta_3'$ and $\beta_3''$ are Weyl equivalent. All the remaining diagrams are just Weyl equivalent to themselves and their Weyl groupoids are standard (indeed, the name of the diagram is a \emph{greek} version of the associated finite Cartan matrix).
\end{enumerate}	
\end{remark}
	
There exists a common pattern on Yetter-Drinfeld modules of types $\alpha_{\theta}$, $\gamma_{\theta}$, $\delta_{\theta}$, $\epsilon_{\theta}$ and $\phi_4$. Assume that we have one of these types and the group $G$ is generated by the support. Each simple Yetter-Drinfeld submodule $V_i$ of dimension two is as in Example \ref{ex:simple-YD-dim2}: it has a basis $x_i$, $x_{\overline{i}}$, with coaction given by $g_i$, $\kappa g_i$, respectively. As in loc. cit. $\kappa$ is central and such that $\kappa^2=1$. Then $G$ fits into a central extension $1\to \Z_2\to G\to\Gamma\to 1$, where $\Gamma$ is an abelian group and $\Z_2$ corresponds to $\{1,\kappa\}$. In addition, if $V_j$ is another two-dimensional Yetter-Drinfeld such that $i$ and $j$ are connected, then $V_j$ has a basis $x_j$, $x_{\overline{j}}$, with coaction given by $g_j$, $\kappa g_j$, and $g_ig_j=\kappa g_j g_i$.

We say that a Yetter-Drinfeld module in the above list is \emph{almost diagonal} if it decomposes as a direct sum of Yetter-Drinfeld modules of types $\alpha_{\theta}$, $\gamma_{\theta}$, $\delta_{\theta}$, $\epsilon_{\theta}$ and $\phi_4$. The name relies in the fact described below. To do so, first we need some notation. 
Given a $2$-cocycle $\sigma\in H^2(G,\Bbbk)$, there exists a tensor functor $F_{\sigma}:\yd{\Bbbk G}\to\yd{\Bbbk G}$ which essentially twists the $G$-action, see e.g. \cite[\S 3.1]{AngLSa-standard}.

\begin{theorem}\label{thm:almost-diagonal}
\cite{AngLSa-standard}
Let $G$ be a finite non abelian group, $V\in\yd{\Bbbk G}$ almost diagonal such that its support generates $G$. 
Then there exists $\sigma\in H^2(G,\Bbbk)$ such that $F_{\sigma}(V)$ is of diagonal type.
\end{theorem}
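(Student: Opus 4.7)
The plan is to construct $\sigma$ directly from the central extension $1\to\Z_2\to G\to\Gamma\to 1$ that underlies the almost-diagonal structure, and then to verify that the twist $F_{\sigma}$ diagonalises the braiding on a suitable basis. As a first step I would fix a set-theoretic section $s\colon\Gamma\to G$, producing a $2$-cocycle $\omega\in Z^2(\Gamma,\Z_2)$ whose class represents the extension. For each two-dimensional summand $V_i$ the basis vectors $x_i$ and $x_{\overline i}$ have coaction degrees $g_i$ and $\kappa g_i$ that project to the same element of $\Gamma$, so viewed as a $\Gamma$-graded vector space $V$ is already ``diagonal''; the obstruction to being of diagonal type over $G$ comes exclusively from the relations $g_ig_j=\kappa g_jg_i$ between connected summands.

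Next I would build the cocycle $\sigma\in Z^2(G,\Bbbk^{\times})$ so that its antisymmetrisation $\sigma(g,h)\sigma(h,g)^{-1}$ equals $-1$ precisely when the commutator $[g,h]$ equals $\kappa$. To do so, I would inflate a bicharacter $\widetilde c\colon\Gamma\times\Gamma\to\Bbbk^{\times}$ lifting the commutator pairing $c\colon\Gamma\times\Gamma\to\Z_2$ along the embedding $\Z_2\hookrightarrow\Bbbk^{\times}$, $\kappa\mapsto-1$, and then correct by a $1$-cochain on $G$ to absorb the contribution of $\omega$. Because every diagram in the almost-diagonal list $\alpha_\theta,\gamma_\theta,\delta_\theta,\varepsilon_\theta,\varphi_4$ is a tree, the pairing $c$ is supported only on its edges and the lift $\widetilde c$ exists globally without cyclic obstruction.

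The third step is the braiding computation. By \cite[\S 3.1]{AngLSa-standard}, $F_{\sigma}$ preserves the coaction and rescales the action on each homogeneous component by a factor read off from $\sigma$, so the braiding of $F_{\sigma}(V)$ on two homogeneous vectors of degrees $g$ and $h$ differs from the original by the scalar $\sigma(g,h)\sigma(h,g)^{-1}$. For connected two-dimensional summands $V_i,V_j$ the original action of $g_j$ sends $x_i$ to a scalar multiple of $x_{\overline i}$, because $g_jg_ig_j^{-1}=\kappa g_i$; consequently the twisted braiding swaps $x_i$ and $x_{\overline i}$ with a sign that, by the very design of $\sigma$, is exactly what is needed to diagonalise. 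Passing to the symmetric-antisymmetric basis $y_i^{\pm}=x_i\pm x_{\overline i}$ inside each summand converts this swap into a diagonal scaling, and one checks that every generator $g_j$ acts on each $y_i^{\pm}$ by a single scalar, so $F_{\sigma}(V)$ is of diagonal type.

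The main obstacle, as I see it, is the global existence and compatibility of $\widetilde c$ at the branching vertices of $\delta_\theta$ and $\varepsilon_\theta$, together with the interaction with the one-dimensional $-1$-vertices of $\gamma_\theta$ and $\varphi_4$. The tree shape rules out any cyclic obstruction, but a consistency check at each vertex of valence $\ge 2$ is required to glue the local choices of $\widetilde c$ along edges into a single bicharacter on $\Gamma$. The $-1$-vertices contribute to the commutator pairing only through their coaction degrees; they are already one-dimensional and of diagonal type, so no basis change is needed inside them, but they must still be taken into account when verifying the cocycle identity for $\sigma$ at the neighbouring two-dimensional vertices.
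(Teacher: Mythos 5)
Your overall strategy --- extract the commutator pairing of the central extension $1\to\Z_2\to G\to\Gamma\to 1$, realize it as the (anti)symmetrisation of a $2$-cocycle, and then diagonalise in a $\pm$-eigenbasis --- is in the right spirit; note that this survey gives no proof of the statement (it cites \cite{AngLSa-standard} and remarks that these modules arise from the folding construction of \cite{Len}, which is essentially your mechanism run backwards). But as written your argument has concrete gaps. First, the scalar by which $F_{\sigma}$ modifies the action of $g$ on a vector of degree $h$ is $\sigma(g,h)\sigma(ghg^{-1},g)^{-1}$, not $\sigma(g,h)\sigma(h,g)^{-1}$; the two agree only when $g$ and $h$ commute, and the pairs you care about (degrees of connected two-dimensional summands) are exactly the non-commuting ones. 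You can sidestep this by inflating $\sigma$ from $\Gamma$, but then your ``correction by a $1$-cochain'' is a coboundary and changes nothing, and the worry about lifting $c$ to $\widetilde c$ is vacuous: $c$ composed with $\kappa\mapsto -1$ is already a bicharacter $\Gamma\times\Gamma\to\Bbbk^{\times}$, and the only real input is the standard fact that any alternating bicharacter on a finite abelian group is the alternation of some $2$-cocycle --- this holds for every finite abelian group and has nothing to do with the diagrams being trees.

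Second, and more seriously, the diagonalisation step is asserted rather than proved, and the basis $y_i^{\pm}=x_i\pm x_{\overline{i}}$ does not work in general. If a basis vector $v$ is supported on both degrees $g_i$ and $\kappa g_i$, then $c(v\otimes w)=q\,w\otimes v$ forces $g_i$ and $\kappa g_i$ to act on $w$ by the \emph{same} scalar, i.e.\ $\kappa$ must act trivially on $w$; so one must first check (or arrange, using the character $h\mapsto\sigma(\kappa,h)\sigma(h,\kappa)^{-1}$ by which the twist modifies the action of the central element, which is trivial for a cocycle inflated from $\Gamma$) that $\kappa$ acts trivially on every summand in the candidate basis. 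Moreover the eigenvectors of the swap matrix by which $g_j$ acts on $V_i$ are $x_i\pm\lambda x_{\overline{i}}$ for a scalar $\lambda$ depending on the realization, and at a vertex $i$ with two neighbours $j,j'$ one must verify that the twisted actions of $g_j$ and $g_{j'}$ on $V_i$ commute, so that a \emph{simultaneous} eigenbasis exists and the braiding is diagonal with respect to one and the same basis for all pairs of vertices. This compatibility at vertices of valence at least two is precisely where the choice of $\sigma$ does its work; your proposal defers it to ``the very design of $\sigma$''. Until that verification is carried out, uniformly or type by type over $\alpha_{\theta},\gamma_{\theta},\delta_{\theta},\varepsilon_{\theta},\varphi_4$, the proof is incomplete.
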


Up to the twist in Theorem \ref{thm:almost-diagonal}, these Yetter-Drinfeld modules come from the folding construction given in \cite{Len}.
Using this fact the authors exhibited a presentation by generators and relations of all Nichols algebras with almost diagonal braidings, using the defining relations of the associated braided vector spaces of diagonal type, which are of Cartan type with label $q=-1$. 

About the remaining Yetter-Drinfeld modules in Heckenberger-Vendramin classification, one of them was fully studied in \cite{AngSa-non-abelian}. It was denoted  $\HV{1}$ and corresponds to \cite[Example 1.9]{HV-ranktwo}, so we know that $\dim\cB(\HV{1})=10368$. The associated Nichols algebra was fully studied there, including a presentation by generators and relations, and the generation in degree one problem. In fact:

\begin{theorem}\label{thm:gen-deg-one-non-abelian}
Let $G$ be a finite group and $R$ a finite-dimensional coradically graded Hopf algebra in $\yd{\Bbbk G}$. 
Assume that $V\coloneqq R^1$ is either as in Example \ref{exa:FK} for $n=3,4$, or $\HV{1}$, or else almost abelian. Then $R=\cB(V)$.

Therefore, if $H$ is a finite-dimensional pointed Hopf algebra whose infinitesimal braiding is one of those listed above, then $H$ is generated by group-like and skew primitive elements, or equivalently $\gr H=\cB(V)\# \Bbbk G(H)$.
\end{theorem}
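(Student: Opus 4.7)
The plan is to reduce the theorem to the generation-in-degree-one property for $R$: since the subalgebra $\Bbbk\langle V\rangle$ of $R$ generated by $V = R^1$ is a graded connected Hopf subalgebra in $\yd{\Bbbk G}$ whose primitives --- lying inside $\cP(R) = R^1 = V$ by coradical gradedness, and containing $V$ --- coincide with $V$, the characterisation of Nichols algebras recalled in \S \ref{subsec:Nichols-algs} identifies $\Bbbk\langle V\rangle$ with $\cB(V)$. Hence $\cB(V) \hookrightarrow R$ is always a braided Hopf subalgebra inclusion, and it suffices to show that $R$ is itself generated by $V$. The second assertion about $H$ then follows from the Radford-Majid reconstruction $\gr H \cong R \# \Bbbk G(H)$, noting that $\cB(V) \# \Bbbk G(H)$ is generated by $V \# 1$ (which lifts to skew-primitives of $H$) together with $1 \# G(H)$.

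For the almost diagonal case I would invoke Theorem \ref{thm:almost-diagonal} to produce $\sigma \in H^2(G, \Bbbk)$ such that $F_\sigma(V)$ is of diagonal type. Being a braided tensor auto-equivalence of $\yd{\Bbbk G}$, $F_\sigma$ preserves the formation of Nichols algebras and sends coradically graded connected finite-dimensional Hopf algebras to the same. Since generation in degree one depends only on the underlying braided Hopf algebra structure, and since a diagonal braiding can be realised over an abelian group so that Theorem \ref{thm:gen-deg-one-diagonal} applies to $F_\sigma(R)$, we obtain $F_\sigma(R) = \cB(F_\sigma(V))$, and hence $R = \cB(V)$ after undoing the twist.

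For $V$ as in Example \ref{exa:FK} with $n = 3, 4$, the Nichols algebra $\cB(V) = \mathtt{FK}_n$ is presented by the quadratic relations \eqref{eq:defn-rels-FKn-1}--\eqref{eq:defn-rels-FKn-3}, which span the space of primitive elements in $T^2(V)$. Since $R$ is coradically graded, every primitive of $R$ sits in $R^1 = V$, so the images of these quadratic elements in $R^2$ are forced to vanish. Combined with the inclusion $\cB(V) = \mathtt{FK}_n \hookrightarrow R$ and the known equality of dimensions, the resulting surjection $\mathtt{FK}_n \twoheadrightarrow R$ yields $R = \cB(V)$. For $V = \HV{1}$ I would proceed analogously using the presentation of $\cB(\HV{1})$ established in \cite{AngSa-non-abelian}: one argues inductively on degree, showing at each step that a defining relation of $\cB(V)$ is primitive in the appropriate pre-Nichols quotient of $R$, which by $\cP(R) = V$ forces it to vanish.

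The principal obstacle is this inductive verification for $\HV{1}$. Unlike the FK and almost diagonal cases, the defining relations of $\cB(\HV{1})$ spread across several degrees, and checking that each new relation is primitive modulo the previously established ones requires explicit braided-coalgebra computations in a rank-two Yetter-Drinfeld module of dimension six; these are carried out in \cite{AngSa-non-abelian} and constitute the technical heart of the non-abelian part of the theorem.
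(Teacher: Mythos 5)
The paper proves this theorem purely by citation (to \cite{GIV-FKalgebras}, \cite{AngSa-non-abelian} and \cite{AngLSa-standard}), so the test is whether your reconstruction of those arguments is sound. Your opening reduction is fine: $\Bbbk\langle V\rangle\cong\cB(V)$ sits inside $R$ because $\cP(R)=R^1$, and the whole issue is generation. Your treatment of the almost diagonal case via the twist $F_\sigma$ is essentially the argument of \cite{AngLSa-standard}, modulo one caveat: $F_\sigma(R)$ still lives in $\yd{\Bbbk G}$ with $G$ non-abelian, so one cannot literally invoke Theorem \ref{thm:gen-deg-one-diagonal}; what one actually invokes is the underlying result of \cite{Ang-Crelle} that any finite-dimensional pre-Nichols algebra of a braided vector space of diagonal type with finite root system equals its Nichols algebra --- a statement about braided vector spaces, independent of the realizing group.

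The genuine gap is in the $\mathtt{FK}_n$ and $\HV{1}$ cases, where the logic of the standard argument is reversed. Showing that the quadratic relations \eqref{eq:defn-rels-FKn-1}--\eqref{eq:defn-rels-FKn-3} are primitive in $T(V)$ and hence land in $\cP(R)=R^1$, so vanish in degree $2$, only re-establishes that $\Bbbk\langle V\rangle$ is a quotient of $\mathtt{FK}_n$ --- i.e.\ the inclusion $\cB(V)\hookrightarrow R$ you already had. It says nothing about whether $R$ has algebra generators in degree $\ge 2$, and the asserted surjection $\mathtt{FK}_n\twoheadrightarrow R$ is exactly the statement to be proved; as written the step is circular. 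The same confusion appears for $\HV{1}$: there is no ``pre-Nichols quotient of $R$'', since $R$ is not given as a quotient of $T(V)$. The correct mechanism is dual: coradical-gradedness of $R$ is equivalent to the graded dual $S=R^{*\mathrm{gr}}$ being generated in degree one, i.e.\ $S$ is a finite-dimensional pre-Nichols algebra of $V^*$, and one must show $S=\cB(V^*)$. In $S$ the defining relations of $\cB(V^*)$ do \emph{not} vanish because ``primitives sit in degree one'' --- that is precisely the conclusion being sought. Instead one argues that a nonzero homogeneous primitive $r$ of degree $\ge 2$ in $S$ would yield $\cB(V^*\oplus\Bbbk r)\hookrightarrow S$, hence a finite-dimensional Nichols algebra of the enlarged braided vector space, and this is ruled out case by case (via the Weyl groupoid or explicit computation). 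That elimination is the actual content of \cite{GIV-FKalgebras} and \cite{AngSa-non-abelian}; your sketch replaces it with a tautology, so the non-abelian heart of the theorem is not proved.
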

\begin{proof}
When $V$ is as in Example \ref{exa:FK} for $n=3,4$, we refer to \cite[Theorem 1.2]{GIV-FKalgebras} and the references therein. 
For $\HV{1}$, see \cite[Theorem 4.4]{AngSa-non-abelian}.
Finally, the almost abelian case was treated in \cite[Theorem 4.1]{AngLSa-standard}.
\end{proof}

Thus, after considering the main results of \cite{AngLSa-standard,AngSa-non-abelian}, it remains to describe Nichols algebras and generation in degree one problem for five of the Yetter-Drinfeld modules with $\theta=2$, and those of types $\beta_3'$ and $\beta_3''$.

\subsection{Pointed Hopf algebras over abelian groups.}
\label{subsec:classif-abelian}

Due to Theorem \ref{thm:gen-deg-one-diagonal}, the final step of the classification is to get all deformations of $\cB_{\bq}\# \Bbbk G$, where $G$ is a finite abelian group and $\cB_{\bq}$ is a finite-dimensional Nichols algebra of diagonal type admitting a \emph{realization} over $G$: there exist $g_i\in G$ and $\chi_j\in\widehat{G}$ such that $\chi_j(g_i)=q_{ij}$ for all $i,j$.

\smallskip

A first answer to this problem was given in \cite{AnSch-Annals}, where all liftings were computed under the assuption that the order of $G$ is not divisible by 2, 3, 5, 7. Under this restriction, all possible matrices $\bq$ are of Cartan type with a root of unity $q$ of order not divisible by these primes. In addition, the presentation by generators and relations only involves powers of root vectors and quantum Serre relations from Theorem \ref{thm:Nichols-diag-defn-rels}, and quantum Serre relations are not deformed. Also, $N_{\beta}=N$ is the order of $q$, and relations $x_{\beta}^{N}$ are deformed by a recursive formula involving one scalar $\lambda_{\beta}$ for each root $\beta$, subject to the condition: $\lambda_{\beta}=0$ if either $\chi_{\beta}^{N}\ne \epsilon$ or else $g_{\beta}^N=1$.

Later on it was proved in \cite{Ma-cocycle-def} that all liftings in \cite{AnSch-Annals} are cocycle deformations of the associated coradically graded Hopf algebra.

\smallskip

A key step in solving the general case (without restrictions on the order of the group) was to observe that many relations can eventually be deformed, which makes a recursive deformation process harder to describe explicitly, and that it is necessary to involve the cocycle deformations beforehand, rather than a posteriori as in the aforementioned case. To do so, we fix a set of defining relations $\cR_{\bq}=\{r_1,\cdots, r_\ell\}$, where each $r_j\in T(V)$ has degree $\gamma_j\in\N_0^{\theta}$. We assume that the order of the $\N_0$-degrees is not decreasing; that is, if $\gamma_i\le \gamma_j$, then $i<j$. Let $\eta_j$ be the associated element of $\widehat{G}$ describing the $G$-action on $r_j$, and $\gt_j\in G$ the element describing the coaction: if $\gamma_j=\sum_{i=1}^{\theta} a_i\alpha_i$, then $\eta_j=\prod_{i=1}^{\theta}\chi_i^{a_i}$ and $\gt_j=\prod_{i=1}^{\theta}g_i^{a_i}$. Set
\begin{align}\label{eqn:defn-parameter-liftings-diagonal}
\bLa \coloneqq \{ \bla=(\lambda_j)\in \Bbbk^{\ell} | \lambda_j=0 \text{ if either }\eta_j\ne \epsilon \text{ or else }\gt_j=1 \},
\end{align}
a generalization of the set defined in \cite{AnSch-Annals}. For each $\bla\in\bLa$ we will define a lifting $\cH(\bla)$ of $\cB_{\bq}\# \Bbbk G$ which is a quotient of $T(V)\# \Bbbk G$ by relations obtained recursively by \emph{deforming} the relations $r_j\in T(V) \subset T(V)\# \Bbbk G$:
\begin{itemize}[leftmargin=*]
\item Set $\widetilde{r}_1\coloneqq r_1-\lambda_1(1-\gt_1)$, and $\cH_1(\bla)\coloneqq T(V)\# \Bbbk G/\widetilde{r}_1$.
\item Assume that $\cH_j(\bla)$ was already defined. Then there exists $\widehat{r}_{j+1}\in \cH_j(\bla)$ such that $r_{j+1}-\widehat{r}_{j+1}$ is $(1,\gt_{j+1})$-primitive, essentially unique. Then set \begin{align*}
\widetilde{r}_{j+1}& \coloneqq r_{j+1}-\widehat{r}_{j+1}-\lambda_{j+1}(1-\gt_{j+1}), & \cH_{j+1}(\bla) &\coloneqq \cH_j(\bla)/\widetilde{r}_{j+1}.
\end{align*}
\end{itemize}
It follows the strategy depicted in \cite{AAGMV}. Thus, at the end of this process we get a Hopf algebra $\cH(\bla)\coloneqq \cH_{\ell}(\bla)$, which is a cocycle deformation of $\cB_{\bq}\# \Bbbk G$, and such that $\gr \cH(\bla)\simeq \cB_{\bq}\# \Bbbk G$; in other words we construct a family of liftings of $\cB_{\bq}\# \Bbbk G$. Moreover:

\begin{theorem}\label{thm:liftings-abelian}
	\cite{AngGI-liftings}
Let $H$ be a Hopf algebra. Then $\gr H\simeq \cB_{\bq}\# \Bbbk G$ if and only if there exists $\bla\in\bLa$ such that $H\simeq \cH(\bla)$.
In particular, any finite-dimensional pointed Hopf algebra $H$ with abelian coradical is isomorphic to a cocycle deformation of $\cB_{\bq}\# \Bbbk G(H)$, where $\bq$ gives the infinitesimal braiding of $H$. 
\end{theorem}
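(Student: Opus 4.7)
The plan is to prove both directions of the equivalence together with the cocycle deformation assertion. For the direction that each $\cH(\bla)$ is a lifting of $\cB_{\bq}\# \Bbbk G$, I would induct on $j$, showing that the recursive construction is well-defined and that $\gr \cH(\bla) \simeq \cB_{\bq}\# \Bbbk G$. At the inductive step, project $r_{j+1}$ to $\cH_j(\bla)$ and compute $\Delta(r_{j+1}) - r_{j+1}\otimes 1 - \gt_{j+1}\otimes r_{j+1}$; this difference lies in strictly lower $\N_0^{\theta}$-degrees, and by the inductive hypothesis can be expressed in terms of already-imposed relations. One extracts the sought $\widehat{r}_{j+1}$ so that $r_{j+1}-\widehat{r}_{j+1}$ is $(1,\gt_{j+1})$-primitive, essentially unique up to scalar multiples of $1-\gt_{j+1}$. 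The constraints defining $\bLa$ in \eqref{eqn:defn-parameter-liftings-diagonal} are precisely those ensuring that $\lambda_{j+1}(1-\gt_{j+1})$ is $(1,\gt_{j+1})$-primitive and transforms under $G$ by the character $\eta_{j+1}$; quotienting by $\widetilde{r}_{j+1}$ then preserves the coradical and shrinks the dimension by the expected amount. A global dimension count against $\dim \cB_{\bq}\cdot |G|$ closes this direction.

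For the converse, take any $H$ with $\gr H\simeq \cB_{\bq}\# \Bbbk G$. Then $H_0=\Bbbk G$ and, by Radford--Majid, choosing a Hopf projection $\pi:H\to \Bbbk G$ yields $R\coloneqq H^{\co\pi}$ with $\gr R\simeq \cB_{\bq}$. Lift a basis of $V\simeq R^1$ to $H$ to obtain a surjective Hopf map $\varphi:T(V)\# \Bbbk G\to H$; surjectivity uses Theorem \ref{thm:gen-deg-one-diagonal}. Evaluating $\varphi$ on the relations $r_j$ in order of non-decreasing $\N_0^{\theta}$-degree, one shows inductively that $\varphi(r_j)$ lies in the space of $(1,\gt_j)$-primitive elements of $H$ modulo the image of the already-imposed relations. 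That space decomposes as $\widehat{r}_j+\Bbbk(1-\gt_j)$ by analysis of the coradical filtration and cosemisimplicity of $\Bbbk G$, so $\varphi(r_j)=\widehat{r}_j+\lambda_j(1-\gt_j)$ for a unique $\lambda_j\in\Bbbk$. Compatibility of $\varphi$ with the $G$-action forces $\lambda_j=0$ whenever $\eta_j\neq \epsilon$, and $\lambda_j(1-\gt_j)$ vanishes trivially when $\gt_j=1$; hence $\bla\in\bLa$. Thus $\varphi$ factors through a surjection $\cH(\bla)\twoheadrightarrow H$, which is an isomorphism by comparing dimensions.

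The cocycle deformation assertion follows the strategy of \cite{AAGMV}: at each inductive stage one constructs a Hopf--Galois object interpolating $\cH_j(\bla)$ and the partial algebra with $\lambda_j$ replaced by $0$, encoded by a $2$-cocycle; composing these cocycles yields a global $\sigma$ with $\cH(\bla)\simeq (\cB_{\bq}\# \Bbbk G)_{\sigma}$. The main obstacle, treated in detail in \cite{AngGI-liftings}, is the recursive construction of $\widehat{r}_{j+1}$ itself. Since the presentation in Theorem \ref{thm:Nichols-diag-defn-rels} mixes power-of-root-vector relations (i), quantum Serre relations (ii), non-Cartan vertex relations (iii) and the exotic small-rank relations (iv), and since \emph{each} of these can be deformed, the induction must simultaneously track how deformations of earlier relations produce cross-terms in later ones. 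Controlling these cross-terms, while keeping the $G$-action compatible and ensuring that the deformed relations do not collapse the algebra, is the delicate heart of the argument and the reason a direct a posteriori cocycle correction (as in \cite{AnSch-Annals}) fails in full generality.
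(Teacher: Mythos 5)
The survey itself offers no proof of this theorem: it is quoted from \cite{AngGI-liftings}, and the text preceding the statement only sketches the recursive construction of $\cH(\bla)$. Your proposal follows that same sketch --- recursive deformation of an ordered set of relations, identification of the deformation parameters via $(1,\gt_j)$-primitives in the converse direction, and cocycle deformations via Hopf--Galois objects --- so at the level of strategy you are aligned with the cited work. Your converse direction is essentially right: generation in degree one (Theorem \ref{thm:gen-deg-one-diagonal}) gives the surjection from $T(V)\#\Bbbk G$, and the $G$-equivariance argument forcing $\lambda_j=0$ when $\eta_j\ne\epsilon$ is the correct mechanism (one must additionally rule out contributions from nontrivial skew-primitives of the same weight $(\eta_j,\gt_j)$ when the degree $\gamma_j$ happens to reproduce the character and group-like data of some $\alpha_i$, but that is a manageable detail).

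The genuine gap is in the forward direction. You write that ``a global dimension count against $\dim\cB_{\bq}\cdot|G|$ closes this direction,'' and you relegate the cocycle-deformation assertion to a separate final step. But no a priori dimension count is available: the entire difficulty is to show that the recursively defined quotients $\cH_j(\bla)$ do not collapse, i.e., that imposing $\widetilde{r}_{j+1}$ does not kill more than intended, and this cannot be read off from the presentation precisely because the deformed relations interact in the complicated ways you yourself describe in your last paragraph. In \cite{AngGI-liftings}, following the strategy of \cite{AAGMV}, the logical order is the reverse of yours: one first constructs, in parallel with the $\cH_j(\bla)$, a tower of cleft objects $\cE_j(\bla)$ (quotients of $T(V)\#\Bbbk G$ by the relations $r_j-\lambda_j$, with no group-like correction term), proves these are nonzero by exhibiting them as cocycle twists, and then obtains $\cH_j(\bla)$ as the associated Schauenburg Hopf algebra of the Hopf--Galois object; the isomorphism $\gr\cH(\bla)\simeq\cB_{\bq}\#\Bbbk G$ and the equality of dimensions are \emph{consequences} of this cocycle-deformation structure, not inputs to it. As written, your forward direction is close to circular: the dimension count you invoke to close the argument is exactly what the Hopf--Galois machinery, which you defer to the end, is there to deliver.
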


In \cite{AngGI-liftings} there also exists a characterization of when different data $\bla$, $\bla'$ give place to isomorphic Hopf algebras $\cH(\bla)\simeq\cH(\bla')$, pointing towards a full classification of finite-dimensional pointed Hopf algebras with abelian coradical. What is missing is an explict description of the $\widehat{r}_j$'s since it might be quite hard and, in a sense, not particularly useful. See \cite{GIJG}, where one of these elements in the Cartan $G_2$ case requires many pages to be written, or \cite{AAGI-CartanA} where some difficulties appear to get a recursive formula in the $A_{\theta}$ case for a root of unity of order 2 or 3.

\subsection{Pointed Hopf algebras over non abelian groups.}
\label{subsec:classif-non-abelian}

Now we assume that $G$ is a finite non-abelian group. As said before, we do not have a full classification of all finite-dimensional Nichols algebras in $\yd{\Bbbk G}$ for general $G$, but still we can go on with the remaining step of the Lifting Method by taking a fixed finite-dimensional Nichols algebra and compute all deformations of $\cB(V)\#\Bbbk G$, as generation in degree one holds by Theorem \ref{thm:gen-deg-one-non-abelian}.

We follow the same strategy, starting with $T(V)\# \Bbbk G$ and quotening by an appropriate set of relations determined recursively by a family of parameters. In order to make this process we need to observe that different relations defining $\cB(V)$ can be obtained by applying the $G$-action to a fixed one. 

\begin{example}
Liftings of Nichols algebras coming from Fomin-Kirillov braidings as in Example \ref{exa:FK} for $n=3,4$ can be found in \cite{GIV-FKalgebras}: we summarize here their constructions. Assume that $V$ has a principal realization over $G$ given by pairs $(g_{ij},\chi_{ij})$ for some $g_{ij}\in G$ and characters $\chi_{ij}$ of $G^{g_{ij}}$.

In $T(V)\# \Bbbk G$ we have that $\sigma x_{ij}^2 \sigma^{-1}=x_{kl}^2$, where $k<l$ is such that $\sigma\cdot x_{ij}=\pm x_{kj}$, and for any pair $i<j$ there exists $\sigma \in G$ such that $\sigma \cdot x_{12}=x_{ij}$. Therefore we need just a  parameter $\lambda_1$ to deform $x_{12}^2=\lambda_1(1-g_{12}^2)$, and from that relations $x_{ij}^2=\lambda_1(1-g_{ij}^2)$ hold in the quotient of $T(V)\# \Bbbk G$ by 
$x_{12}^2-\lambda_1(1-g_{12}^2)$.

Similar situations hold for relations in \eqref{eq:defn-rels-FKn-2} and \eqref{eq:defn-rels-FKn-3}. All in all, the authors proved in that paper that $L$ is a lifting of $\cB(V)\#\Bbbk G$ if and only if $L$ is isomorphic to $\cH(\lambda_1,\lambda_2,\lambda_3)$, the quotient of $T(V)\# \Bbbk G$ by the relations
\begin{align*}
x_{12}^2&=\lambda_1(1-g_{12}^2), 
&
x_{12}x_{34}-x_{34}x_{12}&=\lambda_3(1-g_{12}g_{34}),
\\
& &
x_{23}x_{13}-x_{12}x_{23}+x_{13}x_{12}&=\lambda_2(1-g_{12}g_{13}),
\end{align*}
where the deformation parameters $\lambda_1,\lambda_2,\lambda_3\in\Bbbk$ satisfy the constraints
\begin{itemize}
	\item $\lambda_1=0$ if either $\chi_{12}\ne \epsilon$ or else $g_{12}^2=1$,
	\item $\lambda_2=0$ if either $\chi_{12}\chi_{34}\ne \epsilon$ or else $g_{12}g_{34}=1$,
	\item $\lambda_3=0$ if either $\chi_{12}\chi_{23}\ne \epsilon$ or else $g_{12}g_{23}=1$.
\end{itemize}
\end{example}

Now assume that $G$ is a finite group and $V\in\yd{\Bbbk G}$ has braiding either as in Example \ref{exa:FK} for $n=3,4$, or $\HV{1}$, or else almost abelian with a principal realization $(g_i,\chi_i)$: we have a basis $(x_i)$ of $V$, each $g_{i}\in G$ gives the coaction of $(x_i)$ and each character $\chi_{i}$ of $G^{g_{i}}$ gives the action.
We fix a set of defining relations $\cR_{\bq}=\{r_1,\cdots, r_\ell\}$, where each $r_j\in T(V)$ has degree $\gamma_j\in\N_0^{\theta}$ and does not belong to the $G$-submodule generated by the previous relations. Again we assume that the order of the $\N_0$-degrees is not decreasing. 
Let $\gt_j\in G$ be the element describing the coaction of $r_j$; we can check that the $G$-submodule generated by $r_j$ is of the form $M(G^{\gt_j},\eta_j)$ for $\eta_j$ an element of $\widehat{G^{\gt_j}}$. We set again 
$\bLa \coloneqq \{ \bla=(\lambda_j)\in \Bbbk^{\ell} | \lambda_j=0 \text{ if either }\eta_j\ne \epsilon \text{ or else }\gt_j=1 \}$. 
For each $\bla\in\bLa$ we define a lifting $\cH(\bla)$ of $\cB_{\bq}\# \Bbbk G$ as the quotient of $T(V)\# \Bbbk G$ by relations obtained recursively by \emph{deforming} the relations $r_j\in T(V) \subset T(V)\# \Bbbk G$, such that $\cH(\bla)$ is a cocycle deformation of $\cB_{\bq}\# \Bbbk G$:
\begin{itemize}[leftmargin=*]
	\item $\widetilde{r}_1\coloneqq r_1-\lambda_1(1-\gt_1)$, and $\cH_1(\bla)\coloneqq T(V)\# \Bbbk G/\widetilde{r}_1$.
	\item Once $\cH_j(\bla)$ is defined, set $\widetilde{r}_{j+1}\coloneqq r_{j+1}-\widehat{r}_{j+1}-\lambda_{j+1}(1-\gt_{j+1})$, where $\widehat{r}_{j+1}\in \cH_j(\bla)$ is such that $r_{j+1}-\widehat{r}_{j+1}$ is $(1,\gt_{j+1})$-primitive, and $\cH_{j+1}(\bla)\coloneqq \cH_j(\bla)/\widetilde{r}_{j+1}$.
\end{itemize}
Again we get a Hopf algebra $\cH(\bla)\coloneqq \cH_{\ell}(\bla)$, which is a cocycle deformation of $\cB(V)\# \Bbbk G$, and such that $\gr \cH(\bla)\simeq \cB(V)\# \Bbbk G$. Moreover:

\begin{theorem}\label{thm:liftings-non-abelian}
\cite{GIV-FKalgebras,AngLSa-standard,AngSa-non-abelian}
Let $H$ be a finite-dimensional pointed Hopf algebra. Then $H$ has infinitesimal braiding $V$ as above if and only if there exists $\bla\in\bLa$ such that $H\simeq \cH(\bla)$.

In particular, any finite-dimensional pointed Hopf algebra $H$ with infinitesimal braiding $V$ is isomorphic to a cocycle deformation of the Hopf algebra $\cB(V)\# \Bbbk G(H)$. 
\end{theorem}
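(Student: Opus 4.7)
The plan is to mimic the strategy used for Theorem \ref{thm:liftings-abelian} in the abelian setting, handling each family of braidings (Fomin--Kirillov for $n=3,4$, $\HV{1}$, and almost abelian) within a uniform framework but with case-specific input for the defining relations. First I would verify that the recursive construction of $\cH(\bla)$ is well defined: at step $j+1$, once $\cH_j(\bla)$ has been built, one needs an element $\widehat{r}_{j+1}\in\cH_j(\bla)$ such that $r_{j+1}-\widehat{r}_{j+1}$ is $(1,\gt_{j+1})$-primitive, and moreover essentially unique. This is a Hochschild-style cohomological fact: in the associated graded, $r_{j+1}$ projects to a primitive element (since the $r_j$ are defining relations of $\cB(V)$), so lifting the primitivity to $\cH_j(\bla)$ amounts to absorbing a lower-degree ``error'' into $\widehat{r}_{j+1}$. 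The ambiguity in $\widehat{r}_{j+1}$ is precisely a skew primitive element of the correct bidegree, and by analyzing the $G$-isotype $(\gt_{j+1},\eta_{j+1})$ such an element must be a scalar multiple of $1-\gt_{j+1}$, which is exactly the parameter $\lambda_{j+1}$ accommodates.

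Second, I would show that $\cH(\bla)$ is a cocycle deformation of $\cB(V)\# \Bbbk G$. Following the strategy of \cite{AAGMV}, each step $\cH_j(\bla)\to \cH_{j+1}(\bla)$ can be presented as a cocycle deformation at the level of the preceding algebra, and a finite composition of cocycle deformations is again a cocycle deformation. As a by-product one obtains $\dim\cH(\bla)=\dim\bigl(\cB(V)\# \Bbbk G\bigr)$, so the obvious surjection $\cB(V)\# \Bbbk G \twoheadrightarrow \gr \cH(\bla)$ is an isomorphism, showing that $\cH(\bla)$ is indeed a lifting with infinitesimal braiding $V$.

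For the converse, let $H$ be a finite-dimensional pointed Hopf algebra with $G(H)=G$ and infinitesimal braiding $V$ as above. By Theorem \ref{thm:gen-deg-one-non-abelian} we have $\gr H \simeq \cB(V)\# \Bbbk G$, so $H$ is generated by $G$ together with a choice of lifts $x_i\in H_1$ of the generators of $V$. Proceeding inductively on $j$, the defining relation $r_j$, evaluated on these lifts, lies in the coradical filtration of $H$ strictly below the expected degree. By a Taft--Wilson-type analysis of the $G$-submodule it generates, one writes $r_j = \widehat{r}_j + \lambda_j(1-\gt_j)$ for the $\widehat{r}_j$ constructed above and a unique scalar $\lambda_j$; the constraints in \eqref{eqn:defn-parameter-liftings-diagonal} appear because if $\eta_j\ne \epsilon$ then $r_j-\widehat{r}_j$ and $1-\gt_j$ sit in different $G$-isotypic components (forcing $\lambda_j=0$), while if $\gt_j=1$ then $1-\gt_j=0$. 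This yields $\bla\in\bLa$ with $H\simeq \cH(\bla)$, and uniqueness of the coradical filtration then gives the cocycle deformation statement as an immediate consequence of the first part.

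The hard part will be the first step: for each of the three families of braidings one must explicitly verify that every defining relation $r_{j+1}$ of $\cB(V)$ admits a deformation term $\widehat{r}_{j+1}$ in $\cH_j(\bla)$, and that the resulting $\cH(\bla)$ does not collapse. This requires concrete knowledge of the $G$-submodule generated by each relation and careful bookkeeping of skew primitives, carried out case-by-case in \cite{GIV-FKalgebras,AngSa-non-abelian,AngLSa-standard}. In the almost abelian case, Theorem \ref{thm:almost-diagonal} permits one to \emph{twist} via $F_\sigma$ to the diagonal setting, reducing much of this work to Theorem \ref{thm:liftings-abelian}; by contrast, the Fomin--Kirillov and $\HV{1}$ cases have no such shortcut and demand ad hoc computations tailored to their defining relations.
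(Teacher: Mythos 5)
Your proposal follows essentially the same route the paper takes: the recursive construction of $\cH(\bla)$ by successively deforming the relations $r_j$ (with the constraints on $\bla$ coming from matching the $G$-isotype $M(G^{\gt_j},\eta_j)$ against that of $1-\gt_j$), the cocycle-deformation argument in the style of \cite{AAGMV}, and the converse via generation in degree one (Theorem \ref{thm:gen-deg-one-non-abelian}) plus a lifting of generators. The paper itself, being a survey, defers exactly the case-by-case verifications you flag as the hard part to \cite{GIV-FKalgebras,AngLSa-standard,AngSa-non-abelian}, so your outline is consistent with its argument.
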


\section{Applications to other classification problems.}\label{sec:application-classif}

We would like to describe some other classification problems which are related with the classification of pointed Hopf algebras.

\subsection{Pointed tensor categories.}\label{subsec:application-classif-pointed-cat}

Next we consider a categorical version of our main problem, more precisely the classification of finite pointed tensor categories; we refer to \cite{EGNO} for unexplained notions.

Recall that a tensor category $\cC$ is \emph{pointed} if every simple object is invertible, i.e. has Frobenius-Perron dimension one. Prominent examples of pointed categories are those of comodules over pointed Hopf algebras, but to get all pointed tensor categories we need to consider a generalization of the notion of Hopf algebras $H$. A coquasi-Hopf algebra $H$ is a coalgebra provided with a unit, a multiplication $m:H\otimes H\to H$ which is associative up to an element $\omega:H\otimes H\otimes H\to \Bbbk$, and a antipode, which is a triple $(\cS,\alpha,\beta)$, where $\cS:H\to H$ is a linear map, and $\alpha,\beta\in H^*$ are elements satisfying certain compatibility equations. The category $\comod{H}$ of finite-dimensional $H$-comodules is a tensor category with associativity given by $\omega$, unit given by $\epsilon$ and dual objects determined by the antipode, and admits a quasi-tensor functor to $\vect{\Bbbk}$.

An example of a coquasi-Hopf algebra is given by a finite group $G$ and a normalized 3-cocycle $\omega$ on $G$, i.e. a function $\omega:G\times G\times G\to \Bbbk^{\times}$ such that 
\begin{align*}
\omega(gh,k,\ell)\omega(g,h,k\ell)&= \omega(g,h,k)\omega(g,hk,\ell)\omega(h,k,\ell), & \omega(g,1,h) &=1,
\end{align*}
for all $g,h,k,\ell\in G$. Here $H=\Bbbk^{\omega}G$ has the usual coalgebra and algebra structure but the associativity is given by $\omega$, and in addition 
\begin{align*}
\cS(g)&=g^{-1}, & \alpha(g)&=1, & \beta(g)&=\omega(g,g^{-1},g)^{-1}, & \text{for all  } &g\in G.
\end{align*}
The category $\comod{H}$ is tensor equivalent to the category $\vect{G}^{\omega}$ of $G$-graded vector spaces, where the associativity isomorphism is given by $\omega$.

\smallbreak

Given a pointed coquasi-Hopf algebra $H$, the graded coquasi-Hopf algebra $\gr H=\oplus_{n\ge 0} H^{(n)}$ associated to the coradical filtration is a coradically graded coquasi-Hopf algebra, and $H^{(0)}\simeq \Bbbk^{\omega}G$, where $G$ is the group of group-like elements, and $\omega$ is a normalized 3-cocycle. One may follow an addapted version of the Lifting Method, and try to classify first all finite-dimensional coradically graded pointed coquasi-Hopf algebras, then ask for the corresponding generation in degree one problem, and finally compute all the liftings. Related with the second step, in \cite{EGNO} it was conjectured that any finite pointed tensor category is tensor generated by elements of length $\le 2$, which is a generalization of Andruskiewitsch-Schneider conjecture to the level of tensor categories, and means that every finite-dimensional coradically graded coquasi-Hopf algebra is generated by elements in degree 0 and 1.

The classification of finite pointed tensor categories started in \cite{EG-quasiHopf}, after a couple of works by the same authors dealing with the first steps of the addapted Lifting Method. They solve the case in which the group of simple objects form a cyclic group of prime order $p$. That is, $G=\Z/p\Z$ and the strategy is based on the following facts:
\begin{enumerate}
	\item Take any  finite-dimensional pointed coquasi-Hopf algebra $H$.\footnote{In \cite{EG-quasiHopf} the authors works really with the dual notion, the so-called quasi-Hopf algebras.} Then $H^{(0)}=\Bbbk^{\omega}\Z/p\Z$, and the 3-cocycle $\omega$ is \emph{trivializable} over $\Z/p^2\Z$: it means that the pullback $\pi^*\omega$ under the canonical projection $\pi:\Z/p^2\Z\twoheadrightarrow \Z/p\Z$ is a 3-cocycle over $\Z/p^2\Z$ cohomologous to the trivial cocycle. 
	\item There exists an action on $\modu{H}$ by $\Z/p\Z$ such that the equivariantization of $\modu{H}$ by $\Z/p\Z$ gives a coquasi-Hopf algebra $\widetilde{H}$ with coradical $\Bbbk^{\pi^*\omega}\Z/p^2\Z$. Thus $\widetilde{H}$ is, up to cocycle deformation, isomorphic to a pointed Hopf algebra with coradical $\Bbbk\Z/p^2\Z$.
	\item Using facts from the classification of pointed Hopf algebras \cite{AnSch-liftings}, the authors were able to prove that 
	$\mod{\widetilde{H}}\simeq_{\otimes}\hspace{-8pt} \mod{\widehat{H}}$ for a coradically graded Hopf algebra with coradical $\Bbbk\Z/p^2\Z$.
	\item Using this fact, there exists a positive answer to the conjecture of generation in degree one for finite pointed tensor categories.
	\item Following this path back, the authors calculated all coquasi-Hopf algebras obtained by de-equivariantization of coradically graded Hopf algebras over $\Z/p^2\Z$, thus providing a list of coradically graded pointed co-quasi-Hopf algebras of the form $\cB_{\bq}\#\Bbbk^{\omega}\Z/p\Z$, which classify all finite pointed tensor categories with group of invertible elements $\Z/p\Z$ up to tensor equivalence.
\end{enumerate}

Following the same strategy, the case in which the group is cyclic (of any order coprime with $210$) was developed in \cite{Ang-QuasiHopf}. A key point is that, again, any 3-cocycle over $\Z/n\Z$ is trivializable, thus one relates liftings of coquasi-Hopf algebras with de-equivariantizations of Hopf algebras. In a few not fully precise words, the equivariantization process (coming back from coquasi-Hopf to Hopf algebras) and the cocycle deformation process of Hopf algebras \emph{commute}. Therefore, any pointed tensor category with cyclic group is, up to tensor equivalence, the category of comodules over a coradically graded Hopf algebra obtained by de-equivariantization of a Hopf algebra with coradical $\Z/n^2\Z$.

As a generalization of this idea, the de-equivariantization was used in \cite{AngGa-pointed-tensor} to classify finite-dimensional coradically graded  coquasi-Hopf algebras with coradical $\Bbbk^{\omega}G$, where $G$ is abelian and $\omega$ is trivializable. The final answer relies on the classification of coradically graded pointed tensor algebras. 

Even more, finite-dimensional coradically graded  coquasi-Hopf algebras with abelian coradical were classified in \cite{HLYY-classif-pointed-tensor}. First the authors compute explicitly all 3-cocycles over abelian groups. They essentially proved that if the 3-cocycle is not trivializable, \footnote{In \cite{HLYY-classif-pointed-tensor} they called them abelian, which might be confusing as abelian cocycles refer to other notions on braided tensor categories.} then every Nichols algebra over $\Bbbk^{\omega}G$ is infinite-dimensional. Thus the coradical of any finite-dimensional pointed coquasi-Hopf algebra is $\Bbbk^{\omega}G$ with $\omega$ a trivializable 3-cocycle, and then they construct by hand coquasi-Hopf algebras in this case up to come back to Hopf algebras with abelian coradical: the construction gives the same examples as the de-equivariantization process from \cite{AngGa-pointed-tensor}. 

In the meanwhile, it is proved that any pointed tensor category with abelian group of invertible elements is tensor generated by elements of length $\ge 2$, as conjectured in \cite{EGNO}. The proof is by reduction to the Hopf algebra case, which in turn depends on the presentation by generators and relations of Nichols algebras of diagonal type. Therefore, it is expected that the conjecture is true in a broad sense but one may wonder if there exists a more \emph{categorical} proof, not relying on presentations of Nichols algebras.

\smallbreak

Coming back to the whole classification of pointed tesnor categories, even for abelian groups there is still an open problem:

\begin{question}
Compute the liftings of all pointed coradically graded coquasi-Hopf algebras in \cite{HLYY-classif-pointed-tensor}.
\end{question}

One may wonder if, as in the case of cyclic groups, liftings and de-equivariantizations \emph{commute}, so one refers liftings of coquasi-Hopf algebras to liftings of Hopf algebras, and possibly any lifting is a cocycle deformation of the associated graded coquasi-Hopf algebra, extending what is known for Hopf algebras.

Finally we would like to move to non-abelian groups and see if the program above can be extented to finite pointed tensor categories with non-abelian group of invertible elements.

\subsection{Hopf algebras without Chevalley property.}\label{subsec:application-classif-wo-chevalley}

The Lifting Method depicted in \S \ref{subsec:lifting-method} has an strog assumption on the Hopf algebra structure: The Hopf algebra must satisfy the \emph{Chevalley property}, i.e. the coradical is a Hopf subalgebra. In some cases this assumption does not hold and we need to consider a generalization of the Lifting Method \cite{AC-coFrobenius}. To this end, Andruskiewitsch and Cuadra introduced a new coalgebra filtration $\{H_{[n]}\}_{n\ge 0}$, called the \emph{standard filtration}. Let $H$ be a Hopf algebra with coradical $H_0$. The \emph{Hopf coradical} of $H$ is the subalgebra $H_{[0]}$ generated by $H_0$. The next terms are defined recursively:
\begin{align*}
	H_{[n+1]} & \coloneqq H_{[0]} \wedge H_{[n]} = \Delta^{-1}\left( H_{[0]} \otimes H + H\otimes  H_{[n]} \right), & n&\ge 0.
\end{align*}
As proved in \cite{AC-coFrobenius}, $K=H_{[0]}$ is a Hopf subalgebra with coradical $H_0$, and $\{H_{[n]}\}_{n\ge 0}$ is a Hopf algebra filtration. Also, $K$ is generated as an algebra by the coradical. Starting on these facts, in \cite{AC-coFrobenius} the authors propose a generalized Lifting method associated to the standard filtration. Thus, the starting point is to fix a finite-dimensional Hopf algebra $K$ generated by the coradical and determine all finite-dimensional Nichols algebras in $\yd{K}$. 

\smallskip

As said in \cite[Question I]{AC-coFrobenius}, a first step is to determine all Hopf algebras $K$ generated by a fixed cosemisimple coalgebra $C$. In particular, we may have $K=C$, and $K$ is semisimple, as we have fixed an algebraically closed field $\Bbbk$ of characteristic zero. As the classification of such Hopf algebras is widely open, we cannot attack this question in a broad sense, so in principle we would like to get families of Hopf algebras generated by the coradical.

\begin{example}\label{ex:radford-algebra}
Fix $m,n\ge 2$. The generalized Taft algebra $T_{m,n}$ is the algebra generated by $g,x$ with relations
\begin{align}\label{eq:generalized-taft-relations}
	g^{mn}&=1, & gx &= \xi \, xg, &  x^n &=0, & \xi & \text{ a primitive root of unity of order }n.
\end{align}
It is a Hopf algebra  with coproduct determined by $\Delta(g)=g\otimes g$, $\Delta(x)=x\otimes 1+ g\otimes x$, and of dimension $mn^2$, since the set $\{g^ax^b | 0\le a <mn, \, 0\le b <n\}$ is a basis of $T_{m,n}$. In addition, $T_{m,n}$ is pointed, with $G(T_{m,n})\simeq \Z/ mn \Z$, coradically graded and the infinitesimal braiding is one-dimensional.

The Hopf algebra $T_{m,n}$ has, up to isomorphism, a unique non-trivial lifting $R_{m,n}$, called the Radford algebra: The structure is essentially the same, up to change the last relation in \eqref{eq:generalized-taft-relations} by $x^n=1-g^n$.

The dual Hopf algebra $R_{m,n}^*$ is generated by its coradical, as proved in \cite{ACE-Hopf}.
\end{example}

Following this example, we can look for Hopf algebras generated by the coradical within those already classified. More precisely.

\begin{question}\label{ques:dual-pointed-generated-by-coradical}
Determine for which parameters $\bla\in\bLa$ as in Theorem \ref{thm:liftings-abelian}, respectively \ref{thm:liftings-non-abelian}, the Hopf algebra $\cH(\bla)^*$ is generated by the coradical.
\end{question}

Indeed, Example \ref{ex:radford-algebra} corresponds to the case in which the infinitesimal braiding is one dimensional. Up to finishing the classification of finite-dimensional pointed Hopf algebras, a full answer for the question below will provide the classification of all basic Hopf algebras generated by the coradical.
Notice that the answer will fully depend on the structure as a Hopf algebra.
\smallbreak

On the other hand, an asnwer to the next question of the generalized Lifting Method, that is the determination of all finite dimensional Nichols algebras in $\yd{\cH(\bla)^*}$ for $\bla$ as in Question \ref{ques:dual-pointed-generated-by-coradical}, can be obtained from a categorical point of view. 
As explained in \cite{AA-basic}, when $G$ is abelian, we can take advantage of the following chain of equivalences of braided tensor categories: 
\begin{align}\label{eq:equivalence-YD-liftings}
\cG:\yd{\cH(\bla)^*} & \overset{\sim}{\longrightarrow} \yd{\cH(\bla)}\overset{\sim}{\longrightarrow} \yd{H} \overset{\sim}{\longrightarrow} \modu{D(H)}
\end{align}
where $H=\gr \cH(\bla)$, and $D(H)$ denotes the Drinfeld double of $H$, a certain Hopf algebra with underlying vector space $H\otimes H^*\simeq \cB(V) \otimes \Bbbk (G\times \widehat{G})\otimes \cB(V^*)$. Here we combine the cocycle deformation obtained in Theorem \ref{thm:liftings-abelian} for the second equivalence, together with classical equivalences which hold for any finite-dimensional Hopf algebra. The Hopf algebra $D(H)$ has a triangular decomposition, and any simple module is of the shape $L(\lambda)$, where $\lambda\in\widehat{G\times \widehat{G}} = \widehat{G}\times G$ and $L(\lambda)$ is the simple quotient of the Verma module $M(\lambda)$, the induced module from $D(H)^{\ge}=\cB(V) \otimes \Bbbk (G\times \widehat{G})$. In this case:

\begin{theorem}
\cite[Theorems 1.1 \& 1.2]{AA-basic}
Let $G$ be a finite abelian group, $V\in\yd{\Bbbk G}$ such that $\dim \cB(V)<\infty$, and $\bla\in\bLa$ as in Theorem \ref{thm:liftings-abelian}. Let $\cH(\bla)$ be the associated lifting of $\cB(V)\#\Bbbk G$. 
\begin{enumerate}[leftmargin=*,label=\rm{\roman*)}]
	\item Let $W\in \yd{\cH(\bla)^*}$. If $\dim \cB(W)<\infty$, then $W$ is semisimple.
	\item Let $W\in \yd{\cH(\bla)^*}$ be semisimple, so $\cG(W)=\oplus_{i=1}^n L(\lambda_i)$ for some $\lambda_i\in \widehat{G}\times G$. Then $\dim \cB(W)<\infty$ if and only if $\dim \cB(V\oplus \lambda_1\oplus \cdots \lambda_n)<\infty$, where $\lambda_i\in\yd{\Bbbk G}$ means the corresponding one-dimensional Yetter-Drinfeld module. 
\end{enumerate}
\end{theorem}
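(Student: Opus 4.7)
The plan is to transport both assertions to the equivalent category $\modu{D(H)}$ via the chain of braided tensor equivalences $\cG$ in \eqref{eq:equivalence-YD-liftings}. Since $\cG$ is a braided monoidal equivalence, it sends Nichols algebras to Nichols algebras of equal dimension and preserves semisimplicity. Writing $M=\cG(W)$ and using the triangular decomposition $D(H)=\cB(V^*)\otimes\Bbbk(G\times\widehat G)\otimes\cB(V)$, I reduce the claim to: (i')~if $\dim\cB(M)<\infty$ in $\modu{D(H)}$, then $M$ is semisimple; and (ii')~for $M=\bigoplus_{i=1}^n L(\lambda_i)$, $\dim\cB(M)<\infty$ if and only if $\dim\cB(V\oplus\lambda_1\oplus\cdots\oplus\lambda_n)<\infty$ in $\yd{\Bbbk G}$.

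For (i'), I would argue by contradiction. Suppose $M$ is not semisimple. Pick a non-split short exact sequence $0\to L(\mu)\to N\to L(\nu)\to 0$ of subquotients of $M$. The abelian part $\Bbbk(G\times\widehat G)$ diagonalizes $M$ into weight lines, but the failure to split forces a Jordan block for the action of some element of $\cB(V)\subset D(H)^+$ (or of $\cB(V^*)\subset D(H)^-$) connecting two weight lines. This yields a two-dimensional braided subspace $U\subseteq M$ on which $c_{U,U}$ is not diagonalizable modulo the $G\times\widehat G$-grading. A standard argument in the theory of Nichols algebras (exhibiting iterated braided commutators of the two weight vectors and verifying their linear independence by passing to the associated graded) produces an infinite family of nonzero elements inside $\cB(U)\subseteq\cB(M)$, contradicting $\dim\cB(M)<\infty$.

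For (ii'), the key device is the iterated bosonization
\begin{equation*}
\cB(M)\#H\;=\;\bigl(\cB(M)\#\cB(V)\bigr)\#\Bbbk G,
\end{equation*}
whose middle factor $\widetilde R\coloneqq\cB(M)\#\cB(V)$ is a graded connected Hopf algebra in $\yd{\Bbbk G}$. Its degree-one $\Bbbk G$-primitives are $V\subseteq\cB(V)$ together with the one-dimensional lines $\lambda_i\subseteq L(\lambda_i)\subseteq M$ distinguished by the property that their $H$-coaction lies in $\Bbbk G\otimes M$ (equivalently, they are annihilated by the triangular half of $D(H)$ coming from the $V$-coaction). Moreover, since each $L(\lambda_i)$ is generated from $\lambda_i$ by the complementary triangular half, the bosonization rewrites the elements of $M$ inside $\widetilde R$ as polynomials in $V$ and the $\lambda_i$, so $\widetilde R$ is generated as an algebra by its degree-one primitives. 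Applying Theorem~\ref{thm:gen-deg-one-diagonal} in $\yd{\Bbbk G}$ whenever $\widetilde R$ is finite-dimensional gives $\widetilde R\simeq\cB(V\oplus\lambda_1\oplus\cdots\oplus\lambda_n)$, and therefore
\begin{equation*}
\dim\cB(M)\cdot\dim\cB(V)\;=\;\dim\widetilde R\;=\;\dim\cB(V\oplus\lambda_1\oplus\cdots\oplus\lambda_n).
\end{equation*}
Since $\dim\cB(V)<\infty$ by hypothesis, both sides are finite simultaneously, yielding the claimed equivalence.

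The main obstacle will be part (i'). The Jordan-block argument familiar from Nichols algebras of diagonal type must be upgraded to the $\modu{D(H)}$ setting, where $\cB(V)$ and $\cB(V^*)$ act non-diagonally and the braiding on weight subspaces of $M$ can mix $G\times\widehat G$-gradings in intricate ways. One has to verify carefully that the infinite family of iterated braided commutators produced by the non-split extension is not absorbed by higher-order relations coming from the full $D(H)$-structure on $M$. Once (i') is established, (ii') reduces to the bosonization-and-generation-in-degree-one count above, which is essentially formal.
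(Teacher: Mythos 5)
Your overall strategy---transporting everything to $\modu{D(H)}$ along $\cG$ and then using a bosonization by $\cB(V)$ to compare $\cB(M)$ with a Nichols algebra of diagonal type---is the same one behind the cited result, and your dimension identity $\dim\cB(M)\cdot\dim\cB(V)=\dim\cB(V\oplus\lambda_1\oplus\cdots\oplus\lambda_n)$ is exactly the content of the splitting $\cB(V\oplus\lambda_1\oplus\cdots\oplus\lambda_n)\simeq\cB(L(\lambda_1)\oplus\cdots\oplus L(\lambda_n))\#\cB(V)$ that the survey attributes to the technique of \cite{AHS-american}. As written, however, there are two genuine gaps. First, in (ii') you obtain the identification $\widetilde R\simeq\cB(V\oplus\lambda_1\oplus\cdots\oplus\lambda_n)$ only \emph{under the hypothesis that $\widetilde R$ is finite-dimensional}, so your argument yields the implication $\dim\cB(M)<\infty\Rightarrow\dim\cB(V\oplus\cdots\oplus\lambda_n)<\infty$ but not its converse: a connected graded Hopf algebra generated in degree one by primitive elements is a priori only a pre-Nichols algebra, so without further input one has $\dim\widetilde R\ge\dim\cB(V\oplus\cdots\oplus\lambda_n)$, which is the wrong inequality for deducing finiteness of $\widetilde R$ from finiteness of the diagonal Nichols algebra. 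The remedy is to run the splitting in the other direction: the theorem of \cite{AHS-american} gives $\cB(V\oplus U)\simeq\cB(Z)\#\cB(V)$ with $Z=\ad_c\cB(V)(U)$ \emph{with no finiteness assumption}, and the real work is to identify $Z$ with $\oplus_{i}L(\lambda_i)$; both implications then follow from the unconditional dimension identity. Relatedly, Theorem \ref{thm:gen-deg-one-diagonal} requires $\widetilde R$ to be \emph{coradically} graded, which is strictly stronger than being generated in degree one by primitives; to legitimize that step you would have to pass to the graded dual or, again, simply invoke the splitting theorem.

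Second, part (i') is the hard half and your sketch does not establish it. The group part $G\times\widehat G$ always acts semisimply, so a non-split extension of $D(H)$-modules does not produce a Jordan block for a grouplike element; what it produces is a braided subspace of $M$ that is no longer of diagonal type (a ``block'' in the sense of the classification of Nichols algebras over abelian groups beyond the diagonal case), and there is no off-the-shelf ``standard argument'' showing that every such Nichols algebra is infinite-dimensional. One must determine which non-diagonal braided subspaces can actually arise from the $D(H)$-module structure---using the interplay of the $\cB(V)$- and $\cB(V^*)$-actions together with the known finiteness of $\cB(V)$---and exclude them. You correctly flag this as the main obstacle, but as it stands it is an announcement of the difficulty rather than a proof, so item (i) remains open in your write-up.
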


This result gives a classification of all finite-dimensional Nichols algebras in $\yd{\cH(\bla)^*}$, which in turn give a classification of all finite-dimensional Nichols algebras of finite-dimensional basic Hopf algebras over abelian groups, see \cite[Theorem 1.3]{AA-basic}. The proof of the second item deeply relies in the splitting technique coming from \cite{AHS-american}: There exists a bosonization inside the braided tensor category $\yd{\Bbbk G}$ inducing a decomposition
\begin{align*}
\cB(V\oplus \lambda_1\oplus \cdots \oplus \lambda_n) &\simeq \cB(L(\lambda_1)\oplus \cdots \oplus L(\lambda_n)) \# \cB(V). 
\end{align*}

A particular case was explicitly computed in \cite{BGGM-dualRadford}, where the authors considered the Radford algebra $R_{m,n}$ from Example \ref{ex:radford-algebra} and described the Yetter Drinfeld modules $W$ as above.

A natural question is if these results can be extended to basic Hopf algebras over non-abelian groups.

\subsection{Module categories over Hopf algebras.}\label{subsec:application-classif-module-cat}

Recall that a $\cC$-module category over a tensor category $\cC$ is a 4-uple $(\cM, \otimes,m,\ell)$, where $\cM$ is an abelian category and $\otimes:\cC\times\cM\to\cM$ is a bifunctor, and 
\begin{align*}
m_{X,Y,M}&:(X\otimes Y)\otimes M \to X\otimes (Y\otimes M), & \ell_M&:\mathbf{1}\otimes M\to M, & X,Y\in\cC, & \, M\in \cM, 
\end{align*}
where $\mathbf{1}\in\cC$ is the unit,
are natural isomorfisms satisfying the following identities:
\begin{align*}
(\id{X}\otimes m_{Y,Z,M})m_{X,Y\otimes Z,M}(a_{X,Y,Z}\otimes\id{M})&= m_{X,Y,Z\otimes M}m_{X\otimes Y,Z,M}, 
\\
(\id{X}\otimes \ell_M)m_{X,\mathbf{1},Y} &= r_X\otimes \id{M},
\end{align*}
for all $X,Y,Z\in\cC$ and $M\in\cM$. Here, $r_X:X\otimes\mathbf{1}\to X$ is the natural isomorfism in $\cC$.
For example, any tensor category $\cC$ is a module category over itself.

\smallskip

Let $\cM$, $\cM'$, $\mathcal{N}$ be $\cC$-module categories. 
\begin{itemize}[leftmargin=*]
\item We say that $\cM$ and $\cM'$ are equivalent if there exist $\cC$-module
functors $F:\cM\to\cM'$ and $G:\cM'\to\cM$ such that the compositions $F\circ G$ and $G\circ F$ are naturally isomorphic to the corresponding identities.

\item The direct sum of these categories is the $\Bbbk$-linear category $\cM\times\cM'$ with the canonical $\cC$-structure. A $\cC$-module category $\mathcal{N}$ is \emph{indecomposable} if $\mathcal{N}$ is not equivalent to a direct sum of two non trivial module categories.

\item $\cM$ is \emph{exact} if it is finite and for every projective object $P\in\cC$ and every $M\in\cM$, $P\otimes M$ is projective in $\cM$.
\end{itemize}
As any exact module category decomposes as the direct sum of finite exact indecomposable modules categories,
it suffices to classify exact indecomposable module categories in order to get all exact module categories.

\smallskip

Let $H$ be a finite-dimensional Hopf algebra: we want to describe exact indecomposable module categories over the tensor category $\cC=\modu{H}$ following \cite{AMom-module-cat-Hopfalg}. Recall that a (left) $H$-comodule algebra $A$ is an associative unital algebra $(A,m,u)$ in the category $\comod{H}$. 
An \emph{$H$-ideal} of $A$ is an ideal of $A$ which is also an $H$-subcomodule. In addition, $A$ is said $H$-simple if the unique $H$-ideals are the trivial ones, $0$ and $A$, and $H$-indecomposable if the unique decomposition $A=I\oplus I'$ with $I, I'$ two $H$-ideals of $A$ is the trivial one: $\{I,I'\}=\{0,A\}$.

Let $A$ be an $H$-comodule algebra. The category $\cM=\modu{K}$ is a $\cC$-module category via the coaction $\lambda:A\to H\otimes A$. 
Explicitly, given an $H$-module $X$ and an $A$-module $M$, we set $X\otimes M\in\cM$ with action
\begin{align*}
	a\cdot (x\otimes m) &= a\sw{-1}\cdot x \otimes a\sw{0}\cdot m, & a\in A, & \, x\in X, \, m\in M.
\end{align*}
This defines a functor $\otimes:\cC\times\cM\to\cM$ making $\cM$ a $\cC$-module category.

Next we recall some results from \cite[\S 1]{AMom-module-cat-Hopfalg} translating properties from module categories to comodule algebras. First, $\cM$ is indecomposable if and only if $A$ is $H$-indecomposable. In addition, any indecomposable exact module category over $\cC$ is of the shape $\modu{A}$ for some $H$-indecomposable comodule algebra $A$. Finally, \cite[Theorem 1.25]{AMom-module-cat-Hopfalg} establishes that indecomposable exact module categories over $\cC$ are classified by $H$-indecomposable comodule algebras up to a natural Morita equivalence at the level of comodule algebras.

\smallskip

As stated in loc.cit., examples of exact indecomposable module categories are provided either when $A$ is an $H$-simple comodule algebra, or when $A$ is a coideal subalgebra. Assume that $H=\cB(V)\#\Bbbk G$. Motivated by \cite{Mom-QLS}, were module categories over quantum linear spaces were classified, we may construct $H$-comodule algebras decomposed as a kind of semidirect product $B\rtimes K$, where $B$ is a comodule algebra of $\cB(V)$ and $K$ is the algebra $\Bbbk_{\psi} F$ of an appropriate subgroup $F\subseteq G$ and a $2$-cocycle $\psi$ of $G$ inducing the multiplication, plus some compatibility data. This leads to interesting problems in Nichols algebras: how to find examples and how to classify coideal subalgebras. 

According with \cite{HS-right-coideal}, if $V$ decomposes as a direct sum of simple objects $V=\oplus_{i=1}^\theta V_i$, the Nichols algebra $\cB(V)$ is $\N_0^{\theta}$-graded as stated previously and all $\N_0^{\theta}$-graded coideal subalgebras of $\cB(V)$ are classified by morphisms of the Weyl groupoid $\cW(V)$ ending in $V$; even more, there exists a partial order on this set and the correspondence gives an isomorphism of partially ordered sets. On morphisms ending in $V$, we say that $w_1\le w_2$ if there exists a reduced expression of $w_2$ such that $w_1$ is the beginning of $w_2$; the partial order $\le$ is called the Duflo order as it generalizes the corresponding notion for Weyl groups, and as in this case, it can be read from the root system. Indeed, if we fix a reduced expression $w=s_{i_1}^V\cdots s_{i_{\ell}}$, the set of positive roots in $\varDelta_V^+$ changing to negative roots up to apply $w^{-1}$ is given by $\{\alpha_{i_1},s_{i_1}^V(\alpha_{i_2}), \cdots, s_{i_1}^V\cdots s_{i_{\ell-1}}(\alpha_{i_{\ell}})\}$, and the order on elemets of $\cW_V$ is given by the order on the subsets of positive roots by means of convex orders on $\varDelta_V^+$, see \cite{Ang-JEMS}.

Thus, combining \cite{HS-right-coideal} and \cite{Mom-QLS} we can give a large family of coideal subalgebras of $H=\cB(V)\#\Bbbk G$, which in turn gives exact indecomposable module categories for $\modu{H}$. 

\smallskip

Invoking again a categorical point of view and Theorem \ref{thm:liftings-abelian} and \ref{thm:liftings-non-abelian}, we can reduce the classification of exact indecomposable module categories, or at least the construction of examples, from general finite-dimensional pointed Hopf algebras to those of the form $H=\cB(V)\#\Bbbk G$, where $G$ and $V$ are as in these theorems. Indeed, given an arbitrary finite-dimensional pointed Hopf algebra $L$ as above,  $\modu{L}$ is categorical Morita equivalent to $\comod{L}$ in the sense of \cite{EGNO}, which in turn coincides with $\comod{H}$ for $H=\gr L$, hence there exists a bijective correspondence between module categories of $\modu{L}$ and $\modu{H}$.

\begin{remark}
\begin{itemize}
	\item A particular instance of the procedure above is when $L=\mathfrak{u}_q{\mathfrak{g}}$, the small quantum group of a semisimple Lie algebra $\mathfrak{g}$ at a root of unity $q$ of odd order $N$: using the previous point of view we reduce the computation to that of $H=\cB(V)\#\Bbbk G$, where $G$ decomposes as the product of $\theta=\operatorname{rk} \mathfrak{g}$ finite cyclic groups and $V=V_1\oplus V_2$ is the direct sum of two braided vector spaces of Cartan type $A$, where $A$ is the finite Cartan matrix of $\mathfrak{g}$, one corresponding to the positive part and another to the negative part of $\mathfrak{u}_q{\mathfrak{g}}$.
	\item Some ideas related with those described above were given in \cite{NSS-module-cat-uqsl2}, where the authors classified all exact indecomposable module categories when $\mathfrak{g}=\mathfrak{sl}_2$, relating this problem with the associated graded object, which is a quantum linear space, and then relating the classification with that in \cite{Mom-QLS}. In \cite{NSS-module-cat-uqsl2} they gave explicit descriptions by generators and relations of all comodule algebras needed for the classification, which seems somewhat unmanageable in the general case and leads to the categorical viewpoint being more appropriate for obtaining  classification results of module categories.
\end{itemize}
\end{remark}

%
%
%
%
%

\section*{Acknowledgments.}

I would like to thank all my co-authors. Each of them influenced not only the collaborative work we did together but also enriched my entire research by providing me different points of view and tools. Without any doubt, this invitation to contribute to the Proceedings of the ICM 2026, in connection with the invited lecture in the Algebra Section, would not have been possible.

\bibliographystyle{siamplain}
\bibliography{example_references}

\begin{thebibliography}{10}

\bibitem{A-Nichols}
{\sc N.~Andruskiewitsch}, {\em An introduction to {Nichols} algebras}, in
  Quantization, geometry and noncommutative structures in mathematics and
  physics. Contributions of the summer school, Villa de Leyva, Columbia, Cham:
  Springer, 2017, pp.~135--195,
  \url{https://doi.org/10.1007/978-3-319-65427-0_4}.

\bibitem{AA-survey-diagonal}
{\sc N.~Andruskiewitsch and I.~Angiono}, {\em On finite dimensional {Nichols}
  algebras of diagonal type}, Bull. Math. Sci., 7 (2017), pp.~353--573,
  \url{https://doi.org/10.1007/s13373-017-0113-x}.

\bibitem{AA-basic}
{\sc N.~Andruskiewitsch and I.~Angiono}, {\em On {Nichols} algebras over basic
  {Hopf} algebras}, Math. Z., 296 (2020), pp.~1429--1469,
  \url{https://doi.org/10.1007/s00209-020-02493-w}.

\bibitem{AAGI-CartanA}
{\sc N.~Andruskiewitsch, I.~Angiono, and A.~Garc{\'{\i}}a~Iglesias}, {\em
  Liftings of {Nichols} algebras of diagonal type {I}. {Cartan} type {A}}, Int.
  Math. Res. Not., 2017 (2017), pp.~2793--2884,
  \url{https://doi.org/10.1093/imrn/rnw103}, \url{hdl.handle.net/11336/58327}.

\bibitem{AAGMV}
{\sc N.~Andruskiewitsch, I.~Angiono, A.~Garc{\'{\i}}a~Iglesias, A.~Masuoka, and
  C.~Vay}, {\em Lifting via cocycle deformation.}, J. Pure Appl. Algebra, 218
  (2014), pp.~684--703, \url{https://doi.org/10.1016/j.jpaa.2013.08.008},
  \url{hdl.handle.net/11086/29923}.

\bibitem{ACG-last}
{\sc N.~Andruskiewitsch, G.~Carnovale, and G.~A. Garc{\'{\i}}a}, {\em
  Finite-dimensional pointed {Hopf} algebras over finite simple groups of {Lie}
  type {VII}. {Semisimple} classes in {{\(\mathbf{PSL}_n (q)\)}} and
  {{\(\mathbf{PSp}_{2n}(q)\)}}}, J. Algebra, 639 (2024), pp.~354--397,
  \url{https://doi.org/10.1016/j.jalgebra.2023.09.037}.

\bibitem{AC-coFrobenius}
{\sc N.~Andruskiewitsch and J.~Cuadra}, {\em On the structure of
  (co-{Frobenius}) {Hopf} algebras.}, J. Noncommut. Geom., 7 (2013),
  pp.~83--104, \url{https://doi.org/10.4171/JNCG/109}.

\bibitem{ACE-Hopf}
{\sc N.~Andruskiewitsch, J.~Cuadra, and P.~Etingof}, {\em On two finiteness
  conditions for {Hopf} algebras with nonzero integral.}, Ann. Sc. Norm. Super.
  Pisa, Cl. Sci. (5), 14 (2015), pp.~401--440,
  \url{https://doi.org/10.2422/2036-2145.201206_011}.

\bibitem{AFGV-sporadic}
{\sc N.~Andruskiewitsch, F.~Fantino, M.~Gra{\~n}a, and L.~Vendramin}, {\em
  Pointed {Hopf} algebras over the sporadic simple groups.}, J. Algebra, 325
  (2011), pp.~305--320, \url{https://doi.org/10.1016/j.jalgebra.2010.10.019}.

\bibitem{AGr-Advances}
{\sc N.~Andruskiewitsch and M.~Gra{\~n}a}, {\em From racks to pointed {Hopf}
  algebras}, Adv. Math., 178 (2003), pp.~177--243,
  \url{https://doi.org/10.1016/S0001-8708(02)00071-3}.

\bibitem{AHS-american}
{\sc N.~Andruskiewitsch, I.~Heckenberger, and H.-J. Schneider}, {\em The
  {Nichols} algebra of a semisimple {Yetter}-{Drinfeld} module.}, Am. J. Math.,
  132 (2010), pp.~1493--1547.

\bibitem{AHV-2024}
{\sc N.~Andruskiewitsch, I.~Heckenberger, and L.~Vendramin}, {\em Pointed
  {Hopf} algebras of odd dimension and {Nichols} algebras over solvable
  groups}.
\newblock Preprint, {arXiv}:2411.02304 [math.{QA}] (2024), 2024,
  \url{https://arxiv.org/abs/2411.02304}.

\bibitem{AMom-module-cat-Hopfalg}
{\sc N.~Andruskiewitsch and J.~M. Mombelli}, {\em On module categories over
  finite-dimensional {Hopf} algebras.}, J. Algebra, 314 (2007), pp.~383--418,
  \url{https://doi.org/10.1016/j.jalgebra.2007.04.006}.

\bibitem{AnSch-liftings}
{\sc N.~Andruskiewitsch and H.-J. Schneider}, {\em Lifting of quantum linear
  spaces and pointed {Hopf} algebras of order {{\(p^3\)}}}, J. Algebra, 209
  (1998), pp.~658--691, \url{https://doi.org/10.1006/jabr.1998.7643}.

\bibitem{AnSch-Annals}
{\sc N.~Andruskiewitsch and H.-J. Schneider}, {\em On the classification of
  finite-dimensional pointed {Hopf} algebras.}, Ann. Math. (2), 171 (2010),
  pp.~375--417, \url{https://doi.org/10.4007/annals.2010.171.375},
  \url{annals.princeton.edu/annals/2010/171-1/p07.xhtml}.

\bibitem{Ang-Crelle}
{\sc I.~Angiono}, {\em On {Nichols} algebras of diagonal type.}, J. Reine
  Angew. Math., 683 (2013), pp.~189--251,
  \url{https://doi.org/10.1515/crelle-2011-0008}.

\bibitem{AngGa-pointed-tensor}
{\sc I.~Angiono and C.~Galindo}, {\em Pointed finite tensor categories over
  abelian groups}, Int. J. Math., 28 (2017), p.~18,
  \url{https://doi.org/10.1142/S0129167X17500872}.
\newblock Id/No 1750087.

\bibitem{AngGI-liftings}
{\sc I.~Angiono and A.~Garc{\'{\i}}a~Iglesias}, {\em Liftings of {Nichols}
  algebras of diagonal type {II}: {All} liftings are cocycle deformations},
  Sel. Math., New Ser., 25 (2019), p.~95,
  \url{https://doi.org/10.1007/s00029-019-0452-4}.
\newblock Id/No 5.

\bibitem{AngLSa-standard}
{\sc I.~Angiono, S.~Lentner, and G.~Sanmarco}, {\em Pointed {Hopf} algebras
  over nonabelian groups with nonsimple standard braidings}, Proc. Lond. Math.
  Soc. (3), 127 (2023), pp.~1185--1245,
  \url{https://doi.org/10.1112/plms.12559}.

\bibitem{AngSa-non-abelian}
{\sc I.~Angiono and G.~Sanmarco}, {\em Pointed {Hopf} algebras over non abelian
  groups with decomposable braidings. {I}.}, J. Algebra, 549 (2020),
  pp.~78--111, \url{https://doi.org/10.1016/j.jalgebra.2019.11.040}.

\bibitem{Ang-QuasiHopf}
{\sc I.~E. Angiono}, {\em Basic quasi-{Hopf} algebras over cyclic groups.},
  Adv. Math., 225 (2010), pp.~3545--3575,
  \url{https://doi.org/10.1016/j.aim.2010.06.013}.

\bibitem{Ang-JEMS}
{\sc I.~E. Angiono}, {\em A presentation by generators and relations of
  {Nichols} algebras of diagonal type and convex orders on root systems.}, J.
  Eur. Math. Soc. (JEMS), 17 (2015), pp.~2643--2671,
  \url{https://doi.org/10.4171/JEMS/567}.

\bibitem{BGGM-dualRadford}
{\sc D.~Bagio, J.~M.~J. Giraldi, G.~A. Garc{\'{\i}}a, and O.~M{\'a}rquez}, {\em
  Finite-dimensional {Nichols} algebras over dual {Radford} algebras}, J.
  Algebra Appl., 20 (2021), p.~39,
  \url{https://doi.org/10.1142/S0219498821400016}.
\newblock Id/No 2140001.

\bibitem{CH-fin-Weylgpd}
{\sc M.~Cuntz and I.~Heckenberger}, {\em Finite {Weyl} groupoids}, J. Reine
  Angew. Math., 702 (2015), pp.~77--108,
  \url{https://doi.org/10.1515/crelle-2013-0033}.

\bibitem{Drinfeld}
{\sc V.~G. Drinfel'd}, {\em Quantum groups}.
\newblock Proc. {Int}. {Congr}. {Math}., {Berkeley}/{Calif}. 1986, {Vol}. 1,
  798-820 (1987)., 1987.

\bibitem{EG-quasiHopf}
{\sc P.~Etingof and S.~Gelaki}, {\em Liftings of graded quasi-{Hopf} algebras
  with radical of prime codimension.}, J. Pure Appl. Algebra, 205 (2006),
  pp.~310--322, \url{https://doi.org/10.1016/j.jpaa.2005.06.016}.

\bibitem{EGNO}
{\sc P.~Etingof, S.~Gelaki, D.~Nikshych, and V.~Ostrik}, {\em Tensor
  categories}, vol.~205 of Math. Surv. Monogr., Providence, RI: American
  Mathematical Society (AMS), 2015, \url{https://doi.org/10.1090/surv/205}.

\bibitem{FK}
{\sc S.~Fomin and A.~N. Kirillov}, {\em Quadratic algebras, {Dunkl} elements,
  and {Schubert} calculus}, in Advances in geometry, Boston, MA:
  Birk\-h{\"a}u\-ser, 1999, pp.~147--182.

\bibitem{GIJG}
{\sc A.~Garc{\'{\i}}a~Iglesias and J.~M.~J. Giraldi}, {\em Liftings of
  {Nichols} algebras of diagonal type {III}. {Cartan} type {{\(G_{2}\)}}}, J.
  Algebra, 478 (2017), pp.~506--568,
  \url{https://doi.org/10.1016/j.jalgebra.2017.01.037}.

\bibitem{GIV-FKalgebras}
{\sc A.~Garc{\'{\i}}a~Iglesias and C.~Vay}, {\em Copointed {Hopf} algebras over
  {{\(\mathbb{S}_4\)}}}, J. Pure Appl. Algebra, 222 (2018), pp.~2784--2809,
  \url{https://doi.org/10.1016/j.jpaa.2017.10.020}.

\bibitem{Heck-classif-diagonal}
{\sc I.~Heckenberger}, {\em Classification of arithmetic root systems}, Adv.
  Math., 220 (2009), pp.~59--124,
  \url{https://doi.org/10.1016/j.aim.2008.08.005}.

\bibitem{Heck-Drinfeld-doubles}
{\sc I.~Heckenberger}, {\em Lusztig isomorphisms for {Drinfel}'d doubles of
  bosonizations of {Nichols} algebras of diagonal type}, J. Algebra, 323
  (2010), pp.~2130--2182, \url{https://doi.org/10.1016/j.jalgebra.2010.02.013}.

\bibitem{HMV-Nichols-prime-dim}
{\sc I.~Heckenberger, E.~Meir, and L.~Vendramin}, {\em Finite-dimensional
  {Nichols} algebras of simple {Yetter}-{Drinfeld} modules (over groups) of
  prime dimension}, Adv. Math., 444 (2024), p.~30,
  \url{https://doi.org/10.1016/j.aim.2024.109637}.
\newblock Id/No 109637.

\bibitem{HS-right-coideal}
{\sc I.~Heckenberger and H.-J. Schneider}, {\em Right coideal subalgebras of
  {Nichols} algebras and the {Duflo} order on the {Weyl} groupoid.}, Isr. J.
  Math., 197 (2013), pp.~139--187,
  \url{https://doi.org/10.1007/s11856-012-0180-3}.

\bibitem{HS-book}
{\sc I.~Heckenberger and H.-J. Schneider}, {\em Hopf algebras and root
  systems}, vol.~247 of Math. Surv. Monogr., Providence, RI: American
  Mathematical Society (AMS), 2020, \url{https://doi.org/10.1090/surv/247}.

\bibitem{HV-classif-non-ab}
{\sc I.~Heckenberger and L.~Vendramin}, {\em A classification of {Nichols}
  algebras of semisimple {Yetter}-{Drinfeld} modules over non-abelian groups},
  J. Eur. Math. Soc. (JEMS), 19 (2017), pp.~299--356,
  \url{https://doi.org/10.4171/JEMS/667}.

\bibitem{HV-ranktwo}
{\sc I.~Heckenberger and L.~Vendramin}, {\em The classification of {Nichols}
  algebras over groups with finite root system of rank two}, J. Eur. Math. Soc.
  (JEMS), 19 (2017), pp.~1977--2017, \url{https://doi.org/10.4171/JEMS/711}.

\bibitem{HY-Coxetergpd}
{\sc I.~Heckenberger and H.~Yamane}, {\em A generalization of {Coxeter} groups,
  root systems, and {Matsumoto}'s theorem.}, Math. Z., 259 (2008),
  pp.~255--276, \url{https://doi.org/10.1007/s00209-007-0223-3}.

\bibitem{HY-RUMA}
{\sc I.~Heckenberger and H.~Yamane}, {\em Drinfeld doubles and {Shapovalov}
  determinants}, Rev. Uni{\'o}n Mat. Argent., 51 (2010), pp.~107--146,
  \url{inmabb.criba.edu.ar/revuma/revuma.php?p=toc/vol51}.

\bibitem{HLYY-classif-pointed-tensor}
{\sc H.-L. Huang, G.~Liu, Y.~Yang, and Y.~Ye}, {\em On the {Classification} of
  {Finite} {Quasi}-{Quantum} {Groups} over {Abelian} {Groups}}.
\newblock Preprint, {arXiv}:2403.04455 [math.{QA}] (2024), 2024,
  \url{https://arxiv.org/abs/2403.04455}.

\bibitem{Jimbo}
{\sc M.~Jimbo}, {\em A {{\(q\)}}-analogue of {{\(U(\mathfrak{gl}(N+1))\)}},
  {Hecke} algebra, and the {Yang}-{Baxter} equation}, Lett. Math. Phys., 11
  (1986), pp.~247--252, \url{https://doi.org/10.1007/BF00400222}.

\bibitem{Kharchenko-book}
{\sc V.~Kharchenko}, {\em Quantum {Lie} theory. {A} multilinear approach},
  vol.~2150 of Lect. Notes Math., Cham: Springer, 2015,
  \url{https://doi.org/10.1007/978-3-319-22704-7}.

\bibitem{Len}
{\sc S.~Lentner}, {\em New large-rank {Nichols} algebras over nonabelian groups
  with commutator subgroup $z_2$.}, J. Algebra, 419 (2014), pp.~1--33,
  \url{https://doi.org/10.1016/j.jalgebra.2014.07.017}.

\bibitem{Lusztig-book}
{\sc G.~Lusztig}, {\em Introduction to quantum groups}, Mod. Birkh{\"a}user
  Class., Boston, MA: Birkh{\"a}user, reprint of the 1994 ed.~ed., 2010,
  \url{https://doi.org/10.1007/978-0-8176-4717-9}.

\bibitem{Ma-cocycle-def}
{\sc A.~Masuoka}, {\em Abelian and non-{Abelian} second cohomologies of
  quantized enveloping algebras}, J. Algebra, 320 (2008), pp.~1--47,
  \url{https://doi.org/10.1016/j.jalgebra.2008.03.034},
  \url{hdl.handle.net/2241/100011}.

\bibitem{Mom-QLS}
{\sc M.~Mombelli}, {\em Representations of tensor categories coming from
  quantum linear spaces.}, J. Lond. Math. Soc., II. Ser., 83 (2011),
  pp.~19--35, \url{https://doi.org/10.1112/jlms/jdq059}.

\bibitem{NSS-module-cat-uqsl2}
{\sc D.~Nakamura, T.~Shibata, and K.~Shimizu}, {\em Exact module categories
  over $\operatorname{Rep}(\mathfrak{u}_q(\mathfrak{sl}_2))$}.
\newblock Preprint, {arXiv}:2503.21265 [math.{QA}] (2025), 2025,
  \url{https://arxiv.org/abs/2503.21265}.

\bibitem{Radford-book}
{\sc D.~E. Radford}, {\em Hopf algebras.}, vol.~49 of Ser. Knots Everything,
  Hackensack, NJ: World Scientific, 2012,
  \url{https://doi.org/10.1142/9789814338660}.

\bibitem{Y-super-quantum}
{\sc H.~Yamane}, {\em Quantized enveloping algebras associated with simple
  {Lie} superalgebras and their universal {{\(R\)}}-matrices}, Publ. Res. Inst.
  Math. Sci., 30 (1994), pp.~15--87,
  \url{https://doi.org/10.2977/prims/1195166275}.

\end{thebibliography}
\end{document}